\newtheorem{lemma}{Lemma}[section]
\newtheorem{prop}[lemma]{Proposition}
\newtheorem{cor}[lemma]{Corollary}
\newtheorem{thm}[lemma]{Theorem}
\newtheorem{defn}[lemma]{Definition}
\theoremstyle{remark}
\newtheorem{remark}[lemma]{Remark}
\newtheorem{example}[lemma]{Example}
\newcommand{\defi}[1]{\textsf{#1}} 
\newcommand{\isom}{\cong}
\newcommand{\Id}{\text{Id}}
\newcommand{\Hom}{\operatorname{Hom}}
\newcommand{\Gr}{\operatorname{Gr}}
\newcommand{\GL}{\operatorname{GL}}
\newcommand{\Osh}{{\mathcal O}}
\newcommand{\PP}{\mathbb{P}}
\newcommand{\CC}{\mathbb{C}}
\newcommand{\Seg}{\operatorname{Seg}}
\newcommand{\Sub}{\operatorname{Sub}}
\def\bw#1{{\textstyle\bigwedge^{\hspace{-.2em}#1}}}
\title{Secant varieties of $\PP^2 \times \PP^n$ embedded by $\Osh(1,2)$}
\author{Dustin Cartwright}
\address{Department of Mathematics\\
University of California\\
Berkeley, CA 94720, USA}
\email{dustin@math.berkeley.edu}
\author{Daniel Erman}
\address{Department of Mathematics\\
Stanford University\\
Stanford, CA 94305, USA}
\email{derman@math.stanford.edu}
\author{Luke Oeding}
\address{Dipartimento di Matematica ``U. Dini'' \\
Universit\'a degli Studi di Firenze \\
Firenze, Italy}
\email{oeding@math.unifi.it}
\begin{document}

\begin{abstract}
We describe the defining ideal of the $r$th secant variety of $\PP^2\times \PP^n$ embedded by $\mathcal O(1,2)$, for arbitrary $n$ and $r\leq 5$.
We also present the Schur module decomposition of the space of generators of each such ideal.
Our main results are based on a more general construction for producing explicit matrix equations that vanish on secant varieties of products of projective spaces.  This extends previous work of Strassen and Ottaviani.
%
\end{abstract}
\maketitle

\section{Introduction}\label{sec:intro}
Let $U$, $V$, and~$W$ be complex vector spaces of dimension $m$, $n$, and~$k$
respectively, and let
$x$ be an element in the tensor product of of their duals, $U^* \otimes V^*
\otimes W^*$.
The \defi{border rank} of $x$ is the minimal
$r$ such that the corresponding point
$[x] \in \PP(U^* \otimes V^* \otimes W^*)$ lies in the $r$th secant variety of
the Segre variety of $\PP(U^*) \times \PP(V^*) \times \PP(W^*)$. 
Similarly, for a symmetric tensor $x \in S^3U^*$ or a
partially symmetric tensor $x \in U^* \otimes S^2 V^*$, the \defi{symmetric border rank}
and the \defi{partially symmetric border rank} are the smallest $r$ such that $[x]$ is in
the $r$th secant variety of the Veronese or the Segre-Veronese variety,
respectively.  Developing effective techniques for computing the border rank of tensors
 is an active area of research which spans classical algebraic geometry and 
 representation theory~\cite{lw,lm,lm2,landsberg-2009,LanOtt,ottaviani}.

In the partially symmetric case, the secant varieties of $\PP^1 \times \PP^{n-1}$
embedded by $\Osh(1,2)$ are closely related to standard results about pencils of
symmetric matrices.  Moreover, the non-symmetric analogue is $\PP^1 \times
\PP^{n-1} \times \PP^{k-1}$ embedded by $\Osh(1,1,1)$, and the defining
equations of all of its secant varieties are known by work of Landsberg and
Weyman~\cite[Thm.~1.1]{lw}.  We record the partially symmetric analogue in
Proposition~\ref{prop:m-equals-1}.

Our main result is Theorem~\ref{thm:main}, which focuses on the next case:
secant varieties of $\PP^2\times \PP^{n-1}$ embedded by $\Osh(1,2)$. We
give two explicit matrices, and we prove that, when $r\leq 5$, their minors
and Pfaffians, respectively, generate the defining ideal for these secant
varieties.
To illustrate, fix a basis $\{e_1,e_2,e_3\}$ of~$U^*$.  
We may then express any point $x\in \PP(U^*\otimes S^2 V^*)$ as $x=e_1\otimes
A_1+e_2\otimes A_2+e_3\otimes A_3$ where each $A_i\in S^2 V^*$ can be
represented by an $n\times n$ 
symmetric matrix. With the ordered triplet of matrices $(A_1, A_2,A_3)$ serving
as coordinates on $\PP(U^*\otimes S^2 V^*)$, our main result is the following,
which is a restatement of Theorem~\ref{thm:main}:
\begin{thm}\label{thm:intro}
Let $Y$ be the image of $\PP^2\times \PP^{n-1}$ in
$\PP^{3\binom{n+1}{2}-1} \cong \PP(U^* \otimes S^2 V^*)$ embedded by
$\Osh(1,2)$.  For any $r\leq 5$, the 
$r$th secant variety of~$Y$ is defined by the prime ideal generated by the $(r+1)\times (r+1)$ minors of the $n\times 3n$ block matrix
\begin{equation}\label{eq:flattening}
\begin{pmatrix}
A_1 & A_2 & A_3
\end{pmatrix}
\end{equation}
and by the $(2r+2)\times (2r+2)$ principal Pfaffians of the $3n\times 3n$ block matrix
\begin{equation}\label{eq:skew-sym}
\begin{pmatrix}
0&A_3&-A_2\\ -A_3 & 0 & A_1 \\
A_2&-A_1&0\end{pmatrix}.
\end{equation}
\end{thm}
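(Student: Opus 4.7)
\bigskip
\noindent\textbf{Proof proposal.} The plan is to split the theorem into three stages: set-theoretic containment $\sigma_r(Y) \subseteq V(I)$, reverse set-theoretic containment, and the upgrade from a set-theoretic description to the claim that $I$ is already the prime ideal of $\sigma_r(Y)$, where $I$ denotes the ideal generated by the $(r+1)\times (r+1)$ minors of \eqref{eq:flattening} and the $(2r+2)\times(2r+2)$ principal Pfaffians of \eqref{eq:skew-sym}.

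For the first stage, I would evaluate both matrices on a decomposable point $u\otimes v^{\otimes 2}$, so that $A_i = u_i\, v v^T$. The flattening $(A_1\mid A_2\mid A_3)$ then factors as $v\cdot (u_1 v^T, u_2 v^T, u_3 v^T)$, hence has rank~$1$. The skew-symmetric block matrix \eqref{eq:skew-sym} factors as the Kronecker product of the $3\times 3$ skew ``cross product'' matrix $[u]_\times$ with $v v^T$, so it has rank $2\cdot 1=2$. Subadditivity of rank then shows that on any point of $\sigma_r(Y)$ the flattening has rank $\le r$ and the Pfaffian matrix has rank $\le 2r$, so every generator of $I$ vanishes there. (This also identifies \eqref{eq:skew-sym} as the natural partially symmetric analogue of the Strassen commutator construction, which is the thematic justification for the shape of the matrix.)

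For the second stage, I would invoke an inheritance principle for secant varieties of Segre-Veronese varieties: since the minors and Pfaffians are built $\GL(V)$-equivariantly and a general element of $V(I)$ can, up to the $\GL(V)$-action, be supported on an $O(r)$-dimensional subspace of $V$, it suffices to check the inclusion $V(I)\subseteq \sigma_r(Y)$ for each $r\le 5$ and for a finite range of small $n$. Those finitely many base cases should then be confirmed by an explicit computation (e.g.\ in Macaulay2), verifying both the set-theoretic equality and the primality of $I$. The inheritance argument then propagates both statements to all $n$.

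The main obstacle is the primality statement itself. Set-theoretic equality $V(I)=\sigma_r(Y)$ does not automatically promote to $I=I(\sigma_r(Y))$, so one must control the scheme structure. My plan would be to decompose $I$ into $\GL(U)\times\GL(V)$-Schur modules, compare degree-by-degree against the Schur decomposition of $I(\sigma_r(Y))$ in low degrees (the decomposition promised in the abstract), and then argue that the matching of graded pieces in sufficiently many degrees, combined with the set-theoretic equality, forces $I$ to be the full prime ideal. The delicate part of the inheritance step is ensuring that restricting to a subspace $V'\subset V$ does not kill irreducible summands of $I$ that are needed at larger $n$; dealing with this representation-theoretic bookkeeping, together with the brute-force base-case verification, is where I expect the real work to lie.
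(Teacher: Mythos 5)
Your first stage (set-theoretic containment) is correct and matches the paper: on a decomposable tensor $u\otimes v\otimes v$, the flattening has rank~$1$ and the Kronecker-factored skew matrix has rank~$2$, so subadditivity gives the bounds $\kappa_0\le r$ and $\kappa_1\le 2r$ on $\sigma_r(Y)$; this is precisely Lemma~\ref{lem:ranks} and Proposition~\ref{prop:vanish:semi}. After that, however, your plan diverges from what actually works, and there are two serious gaps.

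First, your base case is the wrong kind of argument, and it would not succeed. You propose to verify $V(I)\subseteq\sigma_r(Y)$ and primality by Macaulay2 for a finite range of small $n$. But the crucial base case is $n=r$, and for $r=5$ this is a Pfaffian ideal of codimension $\binom{5}{2}=10$ in a polynomial ring with $45$ variables — far beyond a brute-force primary decomposition. More importantly, a computational check could not explain \emph{why} the bound $r\le 5$ appears; the hypothesis would look arbitrary. The paper's argument for $n=r$ is genuinely geometric: off the locus $P$ where $\det(t_1A_1+t_2A_2+t_3A_3)$ vanishes identically, one may normalize $A_1=\Id$ and then the $\kappa_1$ equations cut out exactly the variety of commuting symmetric matrices $[B,C]=0$, which is known to be integral of codimension $\binom{r}{2}$ by \cite{bpv}. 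The constraint $r\le 5$ enters because one needs the bad locus $P$ to have codimension strictly greater than $\binom{r}{2}$ (Lemma~\ref{lem:dim-P}, from~\cite{ev}); this is precisely where the argument would break for larger $r$, and indeed Example~\ref{ex:notequal} shows the statement is false for $r=7$. You gesture at Strassen's commutator as ``thematic justification for the shape of the matrix'' but do not use it as a proof tool — yet it \emph{is} the proof.

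Second, your treatment of the reduction to small $n$ and of primality is too loose to carry through. The claim that ``a general element of $V(I)$ can be supported on an $O(r)$-dimensional subspace of $V$'' is essentially the statement that $\Sigma_{\kappa_0\le r}$ is the subspace variety $\Sub_{3,r}$; this is true (Proposition~\ref{prop:subspace}) but must be proved, and passing from a statement about general points to an ideal-theoretic identity for all $n$ requires the equivariant descent machinery of Lemma~\ref{lem:subspace-projection}, which uses the desingularization of $\Sub_{3,r}$ by a vector bundle over a Grassmannian and a Schur-module argument to transfer reducedness. You correctly flag the worry that restriction to $V'\subset V$ might kill needed summands, but offer no mechanism to resolve it — that mechanism is exactly Lemma~\ref{lem:subspace-projection}. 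Likewise, your proposal to get primality by ``matching graded pieces in sufficiently many degrees, combined with set-theoretic equality'' is not a proof: set-theoretic equality plus low-degree agreement does not force an ideal to be radical. The paper instead shows (for $n=r$) that $I_{\kappa_1\le 2r}$ is a linear section of the generic Pfaffian ideal of the expected codimension, hence Gorenstein and unmixed, and then deduces reducedness from reducedness of the commuting-variety slice; primality for general $n$ then propagates through Lemma~\ref{lem:subspace-projection}, and the $n<r$ cases follow by linear projection. These structural ingredients — Cohen–Macaulayness, unmixedness, reducedness at a point, and equivariant descent — are what make the radicality conclusion possible, and none of them appears in your sketch.
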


The matrices which appear in the statement of the above theorem are
examples of what we call the ``exterior flattenings'' of a $3$-tensor (see \S\ref{sec:kappa}),
and the construction of these matrices is motivated by the $\kappa$-invariant of a $3$-tensor,
as introduced in~\cite[\S1.1]{ev}.  The minors of the exterior flattenings of a $3$-tensor
impose necessary equations on a wide array of secant varieties of 
Segre-Veronese embeddings of products of projective spaces.  
The minors of these exterior flattenings simultaneously generalize
both the minors obtained from flattenings of a $3$-tensor and the determinantal equations of
\cite[Lem.~4.4]{strassen} and \cite[Thm.~3.2]{Ottaviani07}.

Under the hypotheses of Theorem~\ref{thm:intro}, the minors and Pfaffians of these exterior flattenings
are insufficient to generate the ideal of the $r$th secant variety for $r \geq 7$.  In other words,
Theorem~\ref{thm:intro} if false is $r\geq 7$, and we do not know if 
Theorem~\ref{thm:intro} holds when $r=6$.
See Example~\ref{ex:notequal} for more details. Note that by
\cite[Cor.~1.4(ii)]{abo}, these secant varieties have the expected dimension
except when $n$ is odd and $r = n + (n+1)/2$.

The proof of our main result uses a mix of representation theory
and geometric techniques for studying determinantal varieties.  
We first introduce the relevant determinantal ideals and we use their
equivariance properties to relate these ideals as the size of the
tensor varies.
Next, we apply this relation in order to understand the defining ideals of certain auxiliary varieties
known as the subspace varieties $\Sub_{m',n'}$ (see Definition~\ref{defn:subspace}).
We then prove our main result in the special case that $n=r$, by
relating the secant variety with the 
variety of commuting symmetric $n\times n$ matrices.  A similar
idea has appeared in several instances previously~\cite{strassen, Ottaviani07,shuchao}.  
This step requires $r\leq 5$.
Finally, we prove our main result by blending our results about subspace varieties 
with our knowledge about the case $n=r$.

Partially symmetric $3$-tensors are closely related to
the study of vector spaces of quadrics, which arise naturally in algebraic geometry.
For instance, in the study of Hilbert schemes of points, 
border rank is connected to the smoothability of zero
dimensional schemes~\cite{cevv, ev}.
As another example, ~\cite[Prop.~6.3]{Ottaviani07} relates the border rank of a partially symmetric tensor 
$x\in \mathbb C^3\otimes S^2(\mathbb C^n)$
with properties of the corresponding degree $n$ determinantal curve in $\mathbb P^2$.

Questions about the border rank of partially symmetric tensors also arise in
algebraic statistics~\cite[\S7]{gss}.  For instance, the situation of the
Theorem~\ref{thm:main} corresponds to a mixture of random processes, each
independently sampling from a distribution with $3$ states and sampling twice
from a distribution with $n$ states. The border rank of the observed
distribution corresponds to the number of processes in the mixture.

In signal processing,
a partially symmetric tensor in $U^*\otimes S^{2}V^*$ can be constructed as
the second derivative of the cumulant generating function taken at
$m$~points~\cite{Yeredor2010}.
The matrix equations in Theorem~\ref{thm:main} can be used to study small border
ranks of such tensors in the case $m =3$.

The defining ideal of the $r$th secant variety of $\PP^2\times \PP^{n-1}$ was
previously known in the case when this secant variety is a hypersurface.  This
occurs when $n \geq 4$ is even, and $r=\frac{3n-2}{2}$, and this result follows
from an analogue to Strassen's argument~\cite[\S~4]{strassen}, as shown by
Ottaviani in the remark following Theorem~4.1 in~\cite{Ottaviani07}.  
For historical interest, we note that the hypersurface case $n=4$ and $r=5$
dates to Emil Toeplitz~\cite{toeplitz}.

Theorem~\ref{thm:main} thus provides a new family 
of examples where we can effectively compute the border rank of a partially symmetric tensor.  
Our main result also provides evidence for a partially symmetric analogue of Comon's
Conjecture, which posits that the symmetric rank of a tensor equals the
rank~\cite[\S 5]{cglm}, as discussed in Remark~\ref{rmk:Comon} below.

%


This paper is organized as follows.  In \S\ref{sec:kappa}, we define a vector
$\kappa$ as an invariant of a $3$-tensor.  We use this $\kappa$-invariant to
produce explicit matrix equations which vanish on the secant varieties of
Segre-Veronese embeddings of projective spaces.  To provide a more invariant
perspective, and to connect with previous literature~\cite{lm,lm2,lw,lw09}, we
also provide Schur module decompositions for our matrix equations.  In
\S\ref{sec:kappa-semi}, we restrict to the case of the $\kappa$-invariant of a
partially symmetric tensor.  Here we also provide Schur module decompositions in the
partially symmetric case.  In \S\ref{sec:subspace}, we show that the $\kappa_0$
equations define subspace varieties. We prove our main result,
Theorem~\ref{thm:main}, in \S\ref{sec:m-3}. 

\begin{remark}\label{rmk:characteristic}
Our results giving equations vanishing on the Segre and Segre-Veronese varieties
(Propositions~\ref{prop:vanish} and~\ref{prop:vanish:semi}) hold in arbitrary
characteristic.
However, our proof of Theorem~\ref{thm:main} does not extend to arbitrary
characteristic because it relies on Lemmas \ref{lem:subspace-projection}
and~\ref{lem:dim-P} and~\cite[Thm.~3.1]{bpv}, all of which require
characteristic~$0$.
\end{remark}

\subsection*{Acknowledgments}
We thank Bernd Sturmfels for inspiring our work on this project, and we thank
David Eisenbud, J. M. Landsberg, Chris Manon, Giorgio Ottaviani, Mauricio
Velasco, and Jerzy Weyman for many useful conversations.  We also thank the
makers of Macaulay2~\cite{M2}, Daniel Grayson and Michael Stillman, and the
makers of LiE~\cite{lie}, M.A.A. van Leeuwen, A.M. Coehn, and B. Lisser, as
these two programs were extremely helpful at many stages of research.

The first author was partially supported by the U.S. National Science
Foundation (DMS-0968882). The second author was partially supported by an NDSEG fellowship and by an NSF Research Postdoctoral Fellowship during his work on this project.
The third author was supported by NSF Grant 0853000: International Research Fellowship Program (IRFP)

\section{The \texorpdfstring{$\kappa$}{k}-invariant of a $3$-tensor}\label{sec:kappa}

From a tensor in $U^* \otimes V^* \otimes W^*$, we will construct a series of
linear maps, whose ranks we define to be the \defi{$\kappa$-invariants} of the tensor. The
$\kappa$-invariants give inequalities on the rank of the tensor, and thus,
determinantal equations which vanish on the secant variety.

There is a natural map $U^* \otimes \bw j U^* \rightarrow
\bw {j+1} U^*$ defined by sending $u \otimes u' \mapsto u \wedge u'$ for
any $0 \leq j \leq m-1$.
This induces an inclusion $U^*\subseteq
\bw j U\otimes \bw {j+1} U^*$. 
By tensoring on both sides by $V^* \otimes W^*$, we get an
inclusion
$U^*\otimes V^*\otimes W^*\subseteq (V\otimes \bw j U^*)^{*}\otimes
(W^*\otimes \bw {j+1} U^*)$. An element of the tensor product on the right-hand side
may be interpreted as a linear homomorphism, meaning that for any $x\in U^*\otimes V^*\otimes W^*$
we have a homomorphism
\begin{align*}
\psi_{j,x}\colon V\otimes \bw j U^* &\to W^*\otimes \bw {j+1} U^*,
\end{align*}
and $\psi_{j,x}$ depends linearly on~$x$. We call $\psi_{j,x}$ an \defi{exterior
flattening} of~$x$, as it generalizes the flattening of a tensor, as discussed
below.
\begin{defn}
Following~\cite[Defn.\ 1.1]{ev}, we define $\kappa_j(x)$ to be the rank of~$\psi_{j,x}$, and
we let $\kappa(x)$ denote the vector of $\kappa$-invariants $(\kappa_0(x),
\dots, \kappa_{m-1}(x))$.
\end{defn} 

More concretely, by choosing bases for the vector spaces, we can represent
$\psi_{j,x}$ as a matrix.  If $e_1, \dots, e_m$ is a basis for $U^*$, then a
basis for $\bw j U^*$ is given by the set of all $e_{i_1} \wedge \ldots
\wedge e_{i_j}$ for $1 \leq i_1 < \cdots < i_j \leq m$, and analogously for
$\bw {j+1}U^*$. For a fixed $u = \sum_{i=1}^m u_i e_i$ in~$U^*$, the map
$\bw j U^* \rightarrow \bw {j+1} U^*$ defined by $u' \mapsto u
\wedge u'$ will send $e_{i_1} \wedge \ldots \wedge e_{i_j}$ to $\sum_i u_i \,
e_i \wedge e_{i_1} \wedge \ldots \wedge e_{i_j}$. Thus, this map will be
represented in the above bases by a matrix whose entries are either $0$ or $\pm
u_i$.  The matrix for $\psi_{j,x}$ is the block matrix formed by replacing the
scalar $u_i$ with the matrix $A_i$ where $A_i$ are the matrices such that $x =
\sum_{i=1}^m e_i \otimes A_i$.

For example, if $m=4$, then $\psi_{j,x}$ are represented by the following
matrices (in suitable coordinates):
\begin{gather*}
\psi_{0,x}\colon V\otimes \bw 0 U^* \xrightarrow{\left(\begin{smallmatrix}
A_1\\A_2\\A_3 \\ A_4 \end{smallmatrix}\right)} 
W^*\otimes \bw 1 U^*,
\notag\\
\psi_{1,x}\colon V\otimes \bw 1 U^* \xrightarrow{
\left(\begin{smallmatrix}
0 & A_3 & -A_2 & 0 \\
-A_3 & 0 & A_1 & 0 \\
A_2 & -A_1 & 0 & 0 \\
A_4 & 0 & 0 & -A_1 \\
0 & A_4 & 0 & -A_2 \\
0 & 0 & A_4 & -A_3
\end{smallmatrix}\right)}
W^*\otimes \bw 2 U^*,
\\
\psi_{2,x}\colon V \otimes \bw 2 U^* \xrightarrow{\left(\begin{smallmatrix}
-A_4 & 0 & 0 & 0 & A_3 & -A_2 \\
0 & -A_4 & 0 & -A_3 & 0 & A_1 \\
0 & 0 & -A_4 & A_2 & -A_1 & 0 \\
A_1 & A_2 & A_3 & 0 & 0 & 0
\end{smallmatrix}\right)}
W^* \otimes \bw 3 U^*\\
\psi_{3,x}\colon V\otimes \bw 3 U^* \xrightarrow{\left(\begin{smallmatrix}
A_1&A_2&A_3&A_4 \end{smallmatrix}\right)}
W^*\otimes \bw 4 U^*.
\notag
\end{gather*}
The entries of these matrices are linear forms on $\PP(U^*\otimes V^*\otimes W^*)$, and 
the minors of these matrices $\psi_{j,x}$ are the ``explicit matrix equations''
alluded to in the introduction.

Note that for $j = 0$, the map $\psi_0 \colon V \otimes \bw 0 U^* \isom V \to
W^* \otimes U^*$ is
the homomorphism corresponding to~$x$ in the identification $U^* \otimes V^*
\otimes W^* \isom \Hom(V, U^* \otimes W^*)$. In the literature, the matrix for $\psi_0$ is referred to as a
``flattening'' of $x$ by grouping $W^*$ and~$U^*$.
Similarly,
$\psi_{m-1,x} \colon V \otimes \bw {m-1} U^* \to W^*$ is the flattening
formed by grouping $U^*$ and $V^*$, because $\bw {m-1} U^* \isom U$.

If $[x]\in \PP(U^{*}\otimes V^{*} \otimes W^{*})$ is a tensor, then the \defi{rank} of $[x]$ is the number $r$ in a minimal expression $x = u_{1}\otimes v_{1}\otimes w_{1} +\cdots+
u_{r}\otimes v_{r}\otimes w_{r}$, where $u_{i}\in U$, $v_{i}\in V$, and $w_{i}\in W$ for $1\leq i\leq r$.
The set of rank-one tensors is closed and equals the Segre variety $\Seg(\PP U^{*}\times \PP V^{*} \times \PP W^{*})$.
More generally, the Zariski closure of the set of tensors $[x]\in
\PP(U^{*}\otimes V^{*} \otimes W^{*})$ having rank at most~$r$ is the $r$th
secant variety of the Segre product, denoted $\sigma_{r}(\Seg(\PP
U^{*}\times \PP V^{*} \times \PP W^{*}))$.  
\begin{defn}\label{def:border-rank}
The \defi{border rank} of $[x]\in \PP(U^{*}\otimes V^{*} \otimes W^{*})$ is the minimal $r$ such that $[x]$ is in $\sigma_{r}(\Seg(\PP U^{*}\times \PP V^{*} \times \PP W^{*}))$.
\end{defn}

The following lemma generalizes the well-known fact that the ranks of the flattenings are bounded above by the tensor rank, and extends a result of Ottaviani~\cite[Thm.\ 3.2(i)]{Ottaviani07}. 
\begin{lemma}\label{lem:ranks}
If $x\in U^*\otimes V^*\otimes W^*$ has border rank at most~$r$, then $\kappa_j(x)\leq r \binom{m-1}{j}$.
\end{lemma}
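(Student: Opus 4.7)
The plan is to prove the bound first for rank-one tensors, extend it to tensors of rank at most $r$ by subadditivity of matrix rank, and then conclude for border rank by a semicontinuity argument.

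First I would check the rank-one case directly. For $x = u \otimes v \otimes w$ with $u \in U^*$, $v \in V^*$, $w \in W^*$, the map $\psi_{j,x}$ factors as the composition
\[
V \otimes \bw{j} U^* \xrightarrow{\;v \otimes \mathrm{id}\;} \bw{j} U^* \xrightarrow{\;u \wedge (\cdot)\;} \bw{j+1} U^* \xrightarrow{\;w \otimes \mathrm{id}\;} W^* \otimes \bw{j+1} U^*.
\]
The first and third maps have rank equal to $\dim \bw{j} U^*$ and $\dim \bw{j+1} U^*$ respectively (they are nonzero when $v,w \neq 0$), while the middle map $u \wedge (\cdot)$ has rank exactly $\binom{m-1}{j}$: extending $u$ to a basis $u = e_1, e_2, \ldots, e_m$ of $U^*$, the image is spanned by $e_1 \wedge e_{i_1} \wedge \cdots \wedge e_{i_j}$ with $1 \notin \{i_1, \ldots, i_j\}$. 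Hence $\kappa_j(x) = \binom{m-1}{j}$ for a nonzero rank-one tensor.

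Next I would use that $\psi_{j,x}$ depends linearly on $x$, as stated in the construction. If $x$ has rank at most $r$, then $x = x_1 + \cdots + x_r$ with each $x_i$ a rank-one tensor, so $\psi_{j,x} = \sum_i \psi_{j,x_i}$. Since the rank of a sum of linear maps is at most the sum of their ranks, the rank-one case gives
\[
\kappa_j(x) = \rank \psi_{j,x} \leq \sum_{i=1}^{r} \rank \psi_{j,x_i} \leq r \binom{m-1}{j}.
\]

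Finally, to pass from rank to border rank, I would invoke the fact that the locus $\{T : \rank T \leq k\}$ inside a space of linear maps is Zariski closed (cut out by $(k+1)\times(k+1)$ minors). Since $x \mapsto \psi_{j,x}$ is linear, the locus $\{x : \kappa_j(x) \leq r\binom{m-1}{j}\}$ is closed in $U^* \otimes V^* \otimes W^*$ and contains all tensors of rank at most $r$; hence it contains the $r$th secant variety, giving the bound for any $x$ of border rank at most $r$. No single step is a serious obstacle; the only subtlety is the factorization in the rank-one case, which makes the computation of $\rank(u \wedge (\cdot))$ transparent.
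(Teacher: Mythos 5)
Your argument is correct and follows the same three-step strategy as the paper: reduce border rank to rank by closedness of rank conditions, reduce to the rank-one case by linearity of $x \mapsto \psi_{j,x}$ and subadditivity of matrix rank, then compute the rank-one case directly. The only difference is cosmetic: for the rank-one computation you factor $\psi_{j,x}$ as a composition and read off the rank of the middle map $u \wedge (\cdot)$, whereas the paper chooses coordinates with $x = e_1 \otimes A$ (where $A$ has a single nonzero entry) and counts the nonzero rows of the resulting matrix; both yield $\kappa_j(x) = \binom{m-1}{j}$, and your factorization is a slightly cleaner, coordinate-free way to see it.
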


\begin{proof}
Since $\kappa_j$ is defined in terms of a matrix rank,
an upper bound on $\kappa_j$ is a closed condition on the set of tensors.
It thus suffices to prove the statement with border rank replaced by rank.
As observed above, $\psi_{j,x}$ depends linearly on $x$, so it is
sufficient to assume that $x$ is an indecomposable tensor, and then show that
$\kappa_j(x) \leq \binom{m-1}{j}$. We can choose coordinates such that $x = e_1
\otimes A$, and $A$ is a matrix with only one non-zero entry. The non-zero rows
of the matrix for $\psi_{j,x}$ will correspond to those basis elements $e_{i_1}
\wedge \cdots \wedge e_{i_j} \in \bw j U^*$ such that $2 \leq i_1 < \cdots
< i_j \leq n$, each of which is sent to a multiple of $e_1 \wedge e_{i_1} \wedge
\cdots \wedge e_{i_j}$. Since there are $\binom{m-1}{j}$ such basis elements,
the rank of~$\psi_{j,x}$ is equal to $\binom{m-1}{j}$.
\end{proof}

The above lemma illustrates that the minors of the exterior flattenings provide 
equations which vanish on the secant variety of a Segre triple product.
We write $S^{\bullet}(U \otimes V \otimes W)$ to denote the polynomial ring on
the affine space $U^* \otimes V^* \otimes W^*$.

\begin{defn}\label{def:ideal-non-sym}
Let $c=(c_0, \dots, c_{m-1})$ be a vector of positive integers. We define
$I_{\kappa_i\leq c_i}$ to be the ideal generated by the $(c_i + 1) \times (c_i +
1)$-minors of $\psi_{j,x}$.
Similarly, we use the notation $I_{\kappa\leq c}$ for the
ideal generated by $I_{\kappa_i \leq c_i}$ for all $0 \leq i \leq m-1$.
Finally, we define $\Sigma_{\kappa_i\leq c_i}$ and $\Sigma_{\kappa\leq c}$ to be
the subschemes of $\PP (U^*\otimes V^*\otimes W^*)$ defined by the ideals $I_{\kappa_i\leq c_i}$ and~$I_{\kappa\leq
c}$ respectively.
\end{defn}

\begin{prop}\label{prop:vanish}
Fix $r \geq 1$.
If $c$ is the vector defined by $c_j=r \binom{m-1}{j}$ for $0 \leq j \leq m-1$,
then
\begin{equation*}
\sigma_r(\Seg(\PP U^{*} \times \PP V^* \times \PP W^*))\subseteq
\Sigma_{\kappa\leq c}.
\end{equation*}
\end{prop}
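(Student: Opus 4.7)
The plan is to derive this as a direct corollary of Lemma~\ref{lem:ranks}. A point $[x] \in \sigma_r(\Seg(\PP U^* \times \PP V^* \times \PP W^*))$ is, by Definition~\ref{def:border-rank}, precisely a point of border rank at most $r$, so Lemma~\ref{lem:ranks} applies and gives $\kappa_j(x) \leq r\binom{m-1}{j} = c_j$ for every $0 \leq j \leq m-1$.

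Next I would translate this rank bound into vanishing of minors. Since $\kappa_j(x)$ is by definition the rank of the matrix $\psi_{j,x}$ (in any choice of bases), the inequality $\kappa_j(x) \leq c_j$ is equivalent to the vanishing of every $(c_j+1)\times (c_j+1)$ minor of $\psi_{j,x}$. These minors are, by construction, a generating set for the ideal $I_{\kappa_j \leq c_j}$, so $[x] \in \Sigma_{\kappa_j \leq c_j}$. Taking the intersection over $j$ gives $[x] \in \Sigma_{\kappa \leq c}$, which is the desired containment.

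There is really no obstacle here beyond carefully unpacking the definitions: one must note that the minor-vanishing conditions cut out a closed subscheme in which the rank bound holds set-theoretically, and that the entries of $\psi_{j,x}$ are linear in the coordinates of $x$ (already observed in the discussion preceding the proposition), so the minors are genuine polynomials in $S^{\bullet}(U \otimes V \otimes W)$ and the scheme $\Sigma_{\kappa \leq c}$ is a well-defined subscheme of $\PP(U^* \otimes V^* \otimes W^*)$.
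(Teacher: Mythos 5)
Your proposal is correct and matches what the paper intends: the paper gives no explicit proof of Proposition~\ref{prop:vanish}, but the text immediately following Lemma~\ref{lem:ranks} signals that the proposition is meant to be read off directly from that lemma, exactly as you do. The unwinding of definitions—border rank bound implies $\kappa_j(x)\leq c_j$, which is equivalent to the vanishing of the $(c_j+1)\times(c_j+1)$ minors generating $I_{\kappa_j\leq c_j}$, hence $[x]\in\Sigma_{\kappa\leq c}$—is the intended argument.
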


\begin{remark}\label{rmk:schur}
Fundamental in the construction of the exterior flattening $\psi_{j,x}$ was the inclusion
of~$U^*$ into $\bw {j} U \otimes \bw {j+1} U^*$. More generally, any
natural inclusion of $U^*$ into the tensor product of two representations would
yield an analogue of $\psi_{j,x}$ as well as analogues of Lemma~\ref{lem:ranks}
and Proposition~\ref{prop:vanish}.  For instance, from the inclusion
$U^*\subseteq S_{(2,1)}(U)\otimes S_{(2,1,1)}(U^*)$, we may associate to a
tensor~$x$ a homomorphism:
\begin{equation*}
V\otimes S_{(2,1)}(U^*)\to W^*\otimes S_{(2,1,1)}(U^*),
\end{equation*}
whose rank is at most $5$ times the border rank of~$x$. We restrict our attention
to the $\kappa$-invariants because these seem to provide particularly useful inequalities in our cases of interest.
However, an example of this generalized construction was introduced and applied
in~\cite[Thm. 1.1]{Ottaviani09} and has been further developed under the name of
Young flattening in~\cite{LanOtt}.
\end{remark}

\begin{example}
If $m=2$, then as stated above, $\kappa_0$ and~$\kappa_1$ are the ranks of the
flattenings formed by grouping $W^*$ with $U^*$ and $V^*$ with $U^*$,
respectively. The ideal of the $r$th secant variety is $I_{\kappa
\leq (r, r)}$~\cite[Theorem~1.1]{lw}.
\end{example}

\begin{example}
Let $m=3$ and suppose that $n=k$ is odd. Denote by~$X$ the Segre product $\Seg(\PP U^*\times \PP V^*\times
\PP W^*)$ in $\PP(U^*\otimes V^*\otimes W^*)$.  Then $I_{\kappa_1
\leq 3n-1}$ is a principal ideal generated by the determinant of $\psi_{1,x}$,
which defines the secant variety
$\sigma_{\frac{3n-1}{2}}(X)$~\cite[Lem.~4.4]{strassen}
(see also~\cite[Rmk.~3.3]{Ottaviani07}).
\end{example}

\begin{example}
The exterior flattening $\psi_{1,x}$ has also arisen in the study of totally symmetric tensors.  For instance, in the case $n=3$, the secant variety $\sigma_3(\nu_3(\mathbb P^2))\subseteq \mathbb P^9$ is a hypersurface defined by the Aronhold invariant.  Ottaviani has shown that this hypersurface is defined by any of the $8\times 8$ Pfaffians of the matrix representing $\psi_{1,x}$ specialized to symmetric tensors~\cite[Thm.\ 1.2]{Ottaviani09}.
\end{example}

However, the ideals $I_{\kappa\leq c}$ do \emph{not} equal the defining ideals
of secant varieties even in relatively simple cases.

\begin{example}
Let $n=m=k=3$ and let $Y$ be the image of $\mathbb P^2\times \mathbb
P^2\times \mathbb P^2\subseteq \mathbb P^{26}$ embedded by $\mathcal O(1,1,1)$.
By Proposition~\ref{prop:vanish}, we know that $I_{\kappa\leq (3,6,3)}$ vanishes
on~$\sigma_3(Y)$, but we claim that it is not the defining ideal.  Observe
that the conditions $\kappa_0\leq 3$ and $\kappa_2\leq 3$ are trivial, and hence
$I_{\kappa\leq (3,6,3)}=I_{\kappa_1\leq 6}$.  By definition, the ideal
$I_{\kappa_1\leq 6}$ is generated by the $7\times 7$ minors of~$\psi_{1,x}$.
However,~\cite[Thm.\ 1.3]{lw} produces degree~$4$ equations which vanish on
$\sigma_3(Y)$, and since
$I_{\kappa_1\leq 6}$ is generated in degree~$7$, we see
that it does not equal the defining ideal of $\sigma_3(Y)$. 
\end{example}

We now study our matrix equations from the perspective of representation theory,
which connects them to previous work on secant varieties of Segre-Veronese
varieties. The representation theory of our ideals $I_{\kappa_i \leq c_i}$ will
also be necessary in the proof of Lemma~\ref{lem:subspace-projection}.

Since the ideals $I_{\kappa_i\leq c_i}$ are invariant under the natural action
of $\GL(U) \times \GL(V) \times \GL(W)$, their generators can be decomposed
as direct sums of irreducible representations of that group.  Each polynomial
representation of $\GL(U) \times \GL(V) \times \GL(W)$ is of the form $S_{\mu}U
\otimes S_{\nu}V \otimes S_{\omega}W$ where $S_{\mu}U$, $S_{\nu}V$, and $S_{\omega}W$ are the Schur modules indexed by partitions $\mu$, $\nu$, and~$\omega$
with at most $m$, $n$, and~$k$ parts, respectively, where if $\pi =
(\pi_{1},\dots,\pi_{s})$ is a partition with $\pi_{1}\geq\pi_{2}\geq \dots \geq
\pi_{s} > 0$, then we say that $s$ is the number of parts of~$\pi$.  For the
summands of the degree~$d$ part of~$I_{\kappa_i \leq c_i} \subset S^d(U\otimes V
\otimes W)$, the partitions will always be partitions of~$d$.  For general
background on Schur modules see~\cite{FultonHarris}.

\begin{lemma}\label{lem:kapp2schur}
For each $j$ and $c_{j}$, there is a $\GL(U)\times \GL(V)\times \GL(W)$-equivariant map
\begin{equation*}
\Phi_j\colon \ \bw {c_j+1}\left( V \otimes \bw {j}U^{*}\right) \otimes \bw
{c_j+1}\left(W\otimes \bw {j+1} U\right) \to S^{c_j+1}\left(U\otimes V\otimes
W\right),
\end{equation*}
whose image equals the vector space of generators of $I_{\kappa_j\leq c_j}$.  In
particular, every irreducible representation arising in the Schur module
decomposition of the generators of $I_{\kappa_j\leq c_j}$ must be a submodule of
both the source and target of~$\Phi_j$.
\end{lemma}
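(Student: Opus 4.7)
The plan is to realize the generators of $I_{\kappa_j\le c_j}$ as the image of an equivariant map coming from the exterior powers of $\psi_{j,x}$ itself. Write $A = V\otimes \bw{j}U^*$ and $B = W^*\otimes \bw{j+1}U^*$, so that $\psi_{j,x}\colon A\to B$ depends linearly on $x\in U^*\otimes V^*\otimes W^*$. First I would recall the standard fact that for any linear map $M\colon A\to B$ the induced $\bw{c_j+1}M\colon \bw{c_j+1}A\to \bw{c_j+1}B$ has matrix entries (with respect to wedges of basis vectors) equal, up to sign, to the $(c_j+1)\times(c_j+1)$ minors of $M$. Applying this with $M=\psi_{j,x}$ and then contracting with the target gives a pairing
\begin{equation*}
\Phi_j\colon \bw{c_j+1}A\otimes \bw{c_j+1}B^* \to S^{c_j+1}(U\otimes V\otimes W),
\end{equation*}
which becomes the desired map once we identify $B^* = W\otimes \bw{j+1}U$. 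The degree is correct because $\psi_{j,x}$ is linear in $x$, so $\bw{c_j+1}\psi_{j,x}$ is a homogeneous polynomial of degree $c_j+1$ in $x$, i.e.\ an element of $S^{c_j+1}(U\otimes V\otimes W)$.

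Next I would verify equivariance and compute the image. The construction of $\psi$ was entirely functorial, built from the canonical inclusion $U^*\hookrightarrow \bw{j}U\otimes \bw{j+1}U^*$ and the identity on $V^*\otimes W^*$, so $x\mapsto \psi_{j,x}$ is $\GL(U)\times\GL(V)\times\GL(W)$-equivariant. Exterior powers, duality, and the evaluation pairing are all canonical, hence $\Phi_j$ is equivariant. For the image, evaluating $\Phi_j$ on a pure wedge of basis elements in the source returns, up to sign, the $(c_j+1)\times(c_j+1)$ minor of $\psi_{j,x}$ indexed by the chosen rows and columns; these minors generate $I_{\kappa_j\le c_j}$ by Definition~\ref{def:ideal-non-sym}, and pure wedges of basis vectors span the source, so the image coincides with the vector space of generators.

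The ``in particular'' statement then follows formally. Since the image is simultaneously a $\GL$-equivariant quotient of the source and a $\GL$-equivariant submodule of the target, complete reducibility of polynomial representations of $\GL(U)\times\GL(V)\times\GL(W)$ in characteristic zero forces every irreducible constituent of the image to occur as a direct summand of both. I expect no conceptual obstacle; the only place that warrants care is the bookkeeping needed to distinguish $B$ from $B^*$ so that the source of $\Phi_j$ appears as $\bw{c_j+1}(V\otimes \bw{j}U^*)\otimes \bw{c_j+1}(W\otimes \bw{j+1}U)$ (as stated in the lemma) rather than with $U^*$ in the second factor, together with keeping sign conventions in the exterior algebra consistent throughout.
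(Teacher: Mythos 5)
Your proposal is correct and takes essentially the same approach as the paper: define $\Phi_j$ by sending wedge-basis elements to the corresponding $(c_j+1)\times(c_j+1)$ minors of $\psi_{j,x}$, observe the image is the space of generators of $I_{\kappa_j\le c_j}$, and invoke Schur's Lemma. Your reformulation via $\bw{c_j+1}\psi_{j,x}$ followed by contraction with $\bw{c_j+1}B^*$ is a cleaner, basis-free way of packaging the same construction, and it makes the equivariance essentially automatic rather than requiring a separate check; but the content of the argument is identical to the paper's.
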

\begin{proof}
Consider the map
\[
\psi_{j,x}\colon V\otimes \bw j U^* \to W^*\otimes \bw {j+1} U^*.
\]
After choosing bases of $U$, $V$, and~$W$, we may think of $\psi_{j,x}$ as a matrix of linear forms in $S^\bullet(U\otimes V\otimes W)$.
Taking the $(c_j+1)\times (c_j+1)$-minors of $\psi_{j,x}$ then determines the
map~$\Phi_j$.  More concretely, our choice of bases for $U$, $V$, and~$W$ determines a natural basis for the source of $\Phi_j$ consisting of indecomposable tensors; we define the map $\Phi_j$ by sending a basis element to the corresponding minor of the matrix $\psi_{j,x}$.  Since the ideal $I_{\kappa_j\leq c_j}$ is defined as the ideal generated by the
image of $\Phi_j$,  the lemma follows from Schur's Lemma.
\end{proof}
 When $j=0$, it is straightforward to compute the Schur module decomposition of $I_{\kappa_0\leq c_0}$, as illustrated by the following example.
\begin{example}\label{ex:333}
The map
\begin{equation*}
\psi_{0,x}\colon V\otimes \bw 0 U^* \rightarrow 
W^*\otimes \bw 1 U^*
\end{equation*}
is a flattening of the tensor $x$ by grouping $U^*$ and $W^*$.  
As representations, the minors of $\psi_{0,x}$ decompose into irreducibles using the skew Cauchy formula \cite[p.~80]{FultonHarris}
\begin{equation*}
\bw{c_{0}+1} V  \otimes \bw{c_{0}+1}( W\otimes U) = \bw{c_{0}+1} V \otimes \left(\displaystyle{\bigoplus_{|\lambda|=c_{0}+1}} S_{\lambda}W \otimes S_{\lambda'}U \right)
,\end{equation*}
where $\lambda$ ranges over all partitions of $c_{0}+1$
and $\lambda'$ is the conjugate partition.

For instance, let $n=m=k=3$ and consider the generators of $I_{\kappa_0\leq 2}$.
This is a vector space of cubic polynomials, and by Lemma~\ref{lem:kapp2schur},
it must be the module
\[
\bw 3 V\otimes \big((S_{3}W\otimes S_{1,1,1}U) \oplus (S_{2,1}W\otimes
S_{2,1}U) \oplus (S_{1,1,1}\otimes S_3U)\big).
\]
After distributing, each irreducible module is the tensor product of three Schur
functors applied to $U$, $V$, and~$W$ respectively, and we can thus drop the vector
spaces and the tensor products from our notation, replacing $S_{\mu}U\otimes S_{\nu}V\otimes S_{\omega}W$ with $S_{\mu}S_{\nu}S_{\omega}$ without any ambiguity. Thus, we
rewrite this module as:
\[
S_{1,1,1}S_{1,1,1}S_3\oplus S_{2,1}S_{1,1,1}S_{2,1}\oplus S_{3}S_{1,1,1}S_{1,1,1}.
\]
The dimension of this space is $10+64+10=84$, which equals the number of maximal minors of the
$3\times 9$ matrix $\psi_{0,x}$.
\end{example}

When $j > 0$, the existence of the dual vector space $U^*$ in the source of $\psi_{j,x}$ makes
finding the Schur module decomposition of $I_{\kappa_i \leq c_i}$ more subtle.
In Proposition~\ref{prop:kappa1nonsym:decomp} below, we provide
an upper bound for the Schur module decomposition of $I_{\kappa_1\leq c_1}$ in the case that
$\dim U=3$.  To state the formula precisely, we first recall some notation.

For any vector space~$A$, the Littlewood-Richardson formula is
\begin{equation}\label{eq:LR:product}
S_{\lambda}A \otimes S_{\mu} A = \bigoplus_{|\pi| = |\lambda|+|\mu|}
S_{\pi}A ^{\oplus c^{\pi}_{\lambda,\mu}}
,\end{equation}
where the multiplicities $c^{\pi}_{\lambda,\mu}$ are the
\defi{Littlewood-Richardson numbers}.
For two vectors spaces $A$ and $B$, we use the outer plethysm formula
\begin{equation}\label{eq:out:plethysm}
S_{\pi}(A\otimes B) = \bigoplus_{|\lambda|+|\mu|=|\pi| }
(S_{\lambda}A \otimes S_{\mu}B) ^{\oplus K_{\pi,\lambda,\mu}}
\end{equation}
to define the \defi{Kronecker coefficients} $K_{\pi,\lambda,\mu}$.

\begin{remark}\label{rmk:repdual}
In Propositions~\ref{prop:kappa1nonsym:decomp},
\ref{prop:kappa0:decomp} and~\ref{prop:kappa1sym:decomp}, we will use the fact
that as $\GL(U)$ modules, 
$S_{\pi}U^{*}\otimes (\bw {m}U)^{l} \cong S_{l^{m}-\pi} U $,
where $l^m$ denotes the partition $(l, \ldots, l)$.
We caution that the entries in $l^{m} -\pi = (l - \pi_m, \ldots, l-\pi_1)$ are
reversed.
\end{remark}

\begin{prop}\label{prop:kappa1nonsym:decomp} Let $\dim(U) =3$.
For any Schur module $S_{\pi}U \otimes S_{\lambda}V \otimes S_{\mu}W$, let
$\lambda'$ and $\mu'$ denote the conjugate partitions of $\lambda$ and~$\mu$
respectively, and let $(3)^{c_0 + 1} - \pi$ be the difference as in
Remark~\ref{rmk:repdual}.
If $S_\pi U \otimes S_\lambda V \otimes S_\mu W$ occurs in the decomposition of
$\left(I_{\kappa_{1}\leq
c_{1}}\right)_{c_{1}+1}$ from Definition~\ref{def:ideal-non-sym}, then $\pi$,
$\lambda'$, and~$\mu'$ have at most $3$~parts, and the multiplicity of $S_\pi U
\otimes S_\lambda V \otimes S_\mu W$ is at most the minimum of
$c^{(3)^{c_{0}+1}-\pi}_{\lambda',\mu'}$
and~$K_{\pi,\lambda,\mu}$.
\end{prop}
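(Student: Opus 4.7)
The plan is to apply Lemma~\ref{lem:kapp2schur} to realize the degree-$(c_1+1)$ component of $I_{\kappa_1\leq c_1}$ as the image of a $\GL(U)\times \GL(V)\times \GL(W)$-equivariant map
\[
\Phi_1\colon \bw{c_1+1}\bigl(V\otimes \bw 1 U^*\bigr)\otimes \bw{c_1+1}\bigl(W\otimes \bw 2 U\bigr)\to S^{c_1+1}(U\otimes V\otimes W),
\]
and then to decompose both source and target into irreducibles. By Schur's Lemma, the multiplicity of $S_\pi U\otimes S_\lambda V\otimes S_\mu W$ in the image is bounded above by its multiplicity in either side, and these two bounds will yield precisely the two quantities appearing in the statement, as well as the constraints on the number of parts.

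For the source, I would first exploit $\dim U=3$ together with the isomorphism $\bw 2 U\cong U^*\otimes \bw 3 U$ to rewrite the source as
\[
(\bw 3 U)^{\otimes (c_1+1)}\otimes \bw{c_1+1}(V\otimes U^*)\otimes \bw{c_1+1}(W\otimes U^*).
\]
Applying the skew Cauchy formula to each of the two exterior powers yields
\[
\bigoplus_{|\lambda|=|\mu|=c_1+1} S_\lambda V\otimes S_\mu W\otimes S_{\lambda'}U^*\otimes S_{\mu'}U^*,
\]
where the partitions $\lambda'$ and~$\mu'$ are forced to have at most~$3$ parts so that the two Schur modules of~$U^*$ are nonzero. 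I would then combine the two factors of~$U^*$ via Littlewood-Richardson, introducing the coefficients $c^\nu_{\lambda',\mu'}$, and finally absorb the twist $(\bw 3 U)^{\otimes(c_1+1)}$ using Remark~\ref{rmk:repdual} to replace $S_\nu U^*$ by $S_{(3)^{c_1+1}-\nu}U$. Reindexing with $\pi=(3)^{c_1+1}-\nu$ produces the source multiplicity $c^{(3)^{c_1+1}-\pi}_{\lambda',\mu'}$ for $S_\pi U\otimes S_\lambda V\otimes S_\mu W$ and simultaneously forces $\pi$ to have at most~$3$ parts (since $S_\pi U$ must be nonzero).

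For the target, I would use the $\GL$-equivariant Cauchy decomposition
\[
S^{c_1+1}\bigl(U\otimes(V\otimes W)\bigr)\cong \bigoplus_{|\pi|=c_1+1} S_\pi U\otimes S_\pi(V\otimes W),
\]
and then expand $S_\pi(V\otimes W)$ via the plethysm formula~\eqref{eq:out:plethysm} to read off the Kronecker coefficient $K_{\pi,\lambda,\mu}$ as the multiplicity of $S_\pi U\otimes S_\lambda V\otimes S_\mu W$ in the target. Comparing the two multiplicities via the equivariance of $\Phi_1$ then yields the claimed upper bound. The main technical obstacle is the careful bookkeeping in the source decomposition: juggling the two conjugate partitions, the Littlewood-Richardson step, and the twist by the top exterior power of~$U$ in order to correctly translate between Schur modules of~$U^*$ and of~$U$ and land on the precise coefficient $c^{(3)^{c_1+1}-\pi}_{\lambda',\mu'}$.
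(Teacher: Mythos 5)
Your proposal is correct and follows essentially the same path as the paper's proof: Lemma~\ref{lem:kapp2schur} to frame the problem, skew Cauchy to decompose the source, the identification $\bw 2 U\cong U^*\otimes\bw 3 U$ together with Remark~\ref{rmk:repdual} to land on the Littlewood--Richardson bound, and Cauchy plus outer plethysm to decompose the target and extract the Kronecker bound. The only cosmetic difference is the order of operations in the source computation (you pull out the twist by $\bw 3 U$ before applying skew Cauchy, while the paper applies skew Cauchy first and then rewrites $S_{\mu'}(\bw 2 U)$), which changes nothing.
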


Computations with the software package
LiE~\cite{lie}
suggest that the decomposition of $\left(I_{\kappa_{1}\leq
c_{1}}\right)_{c_{1}+1}$ may equal the upper bound of
Proposition~\ref{prop:kappa1nonsym:decomp}, as is the case in
Example~\ref{ex:non-sym-333}.

\begin{proof}[Proof of Proposition~\ref{prop:kappa1nonsym:decomp}]
Using Lemma~\ref{lem:kapp2schur}, the $(c_{1}+1) \times (c_{1}+1)$-minors of
$\psi_{1,x}$ belong to the common submodules of the polynomials
$S^{c_{1}+1}(U\otimes V\otimes W)$ and the domain of $\Phi_j$, which we can
rewrite using the Cauchy skew formula:
\begin{equation}\label{eq1:prop:kappa1:decomp}
\left(\bigoplus_{|\lambda|=c_{1}+1} S_{\lambda}V \otimes
S_{\lambda'}U^{*} \right)
\otimes \left(\bigoplus_{|\mu|=c_{1}+1} S_{\mu}W \otimes S_{\mu'}\big(\bw{2}U\big) \right)
.\end{equation}
Here we note that $\lambda'$ and $\mu'$ must have no more than $3$ parts or else the summand is zero.

We focus on the $U$ factor and compute
\begin{align*}
S_{\lambda'}U^{*}\otimes S_{\mu'}(\bw {2}U)
&\cong S_{\lambda'}U^{*}\otimes S_{\mu'}(U^{*}) \otimes (\bw 3 U)^{c_0 + 1}
&&\mbox{because $\dim U = 3$}\\
&\cong 
\bigoplus_{\lvert\nu\rvert = 2(c_{0}+1)}(S_{\nu}U^{*})^{\oplus c^{\nu}_{\lambda',\mu'}}
\otimes (\bw 3 U)^{c_0 + 1}
&&\mbox{by \eqref{eq:LR:product}} \\
&\cong
\bigoplus_{\lvert\nu\rvert = 2(c_{0}+1)}(S_{(c_{0}+1)^{3}-\nu}\,U)^{\oplus c^{\nu}_{\lambda',\mu'}} 
&&\mbox{by Remark~\ref{rmk:repdual} }\\
&\cong\bigoplus_{\lvert\pi\rvert = c_{0}+1}(S_{\pi}U)^{\oplus c^{(c_{0}+1)^{3}-\pi}_{\lambda',\mu'}} 
&&\mbox{by taking $\pi = (c_0 + 1)^3 - \nu$}
.\end{align*}
Therefore expression \eqref{eq1:prop:kappa1:decomp} becomes 
\[
\bigoplus_{|\lambda|=|\mu|=|\pi| =c_{0}+1} S_{\pi}U \otimes S_{\lambda}V \otimes S_{\mu}W ^{\oplus c^{(c_{0}+1)^{3}-\pi}_{\lambda',\mu'}}
.\]

Now we must decide which irreducible modules
occur as a submodule of $S^{c_1+1}(U \otimes V \otimes W)$. For this, we
decompose the space of polynomials using the Cauchy formula and the outer
plethysm formula~\eqref{eq:out:plethysm}:
\[S^{c_{1}+1}(U\otimes V\otimes W)
\cong \bigoplus_{|\pi|=c_{1}+1} S_{\pi} U \otimes S_{\pi}(V \otimes W)
\cong \bigoplus_{|\pi|=|\lambda|=|\mu|=c_{0}+1} (S_{\pi} U \otimes S_{\lambda}V
\otimes S_{\mu}W) ^{\oplus K_{\pi,\lambda,\mu}}
.\]
The proposition statement follows by Lemma~\ref{lem:kapp2schur}.
\end{proof}

\begin{example}\label{ex:non-sym-333}
\begin{figure}
\begin{align*}
&
\begin{array}{ll}\left(I_{\kappa_{1}\leq 1}\right)_2 =&
(\mathfrak{S}_{3}\!\cdot\! S_{1,1}S_{1,1}S_{2}) \oplus S_{2}S_{2}S_{2} \end{array}
\\
&
\begin{array}{ll}\left(I_{\kappa_{1}\leq 2}\right)_3 =& 
(\mathfrak{S}_{3}\!\cdot\! S_{1,1,1}S_{2,1}S_{2,1}) \oplus S_{2,1}S_{2,1}S_{2,1} \oplus
(\mathfrak{S_{3}}\!\cdot\! S_{2,1}S_{2,1}S_{3}) \oplus S_{3}S_{1,1,1}S_{1,1,1} 
\end{array}
\\
&
\begin{array}{ll}\left(I_{\kappa_{1}\leq 3}\right)_4 =& 
S_{2,2}S_{2,2}S_{2,2}\oplus
(\mathfrak{S}_{3}\!\cdot\! S_{2,2}S_{2,1,1}S_{2,1,1}) \oplus 
(\mathfrak{S}_{3}\!\cdot\! S_{2,2}S_{2,1,1}S_{3,1}) \oplus 
(\mathfrak{S}_{3}\!\cdot\! S_{2,2}S_{3,1}S_{3,1}) \\ &
{}\oplus 
S_{2,1,1}S_{2,1,1}S_{2,1,1} \oplus 
(\mathfrak{S}_{3}\!\cdot\! S_{2,1,1}S_{3,1}S_{3,1}) \oplus 
S_{3,1}S_{2,1,1}S_{2,1,1} \oplus
S_{4}S_{2,2}S_{2,2} \\ &
{}\oplus 
S_{4}S_{2,1,1}S_{2,1,1}
\end{array}
\\
&
\begin{array}{ll}\left(I_{\kappa_{1}\leq 4}\right)_5 =& 
S_{2,2,1}S_{2,2,1}S_{2,2,1}\oplus
(\mathfrak{S}_{3}\!\cdot\! S_{2,2,1}S_{2,2,1}S_{3,1,1})\oplus
(\mathfrak{S}_{3}\!\cdot\! S_{2,2,1}S_{3,2}S_{3,2})\oplus
(\mathfrak{S}_{3}\!\cdot\! S_{2,2,1}S_{3,2}S_{3,1,1}) \\ &
{}\oplus (\mathfrak{S}_{3}\!\cdot\! S_{3,2}S_{3,2}S_{3,1,1})\oplus
S_{3,2}S_{2,2,1}S_{2,2,1}\oplus
S_{3,2}S_{3,1,1}S_{3,1,1}\oplus
S_{3,1,1}(\mathfrak{S}_{2}\!\cdot\! S_{2,2,1}S_{3,1,1})
\\ &
{}\oplus     
S_{3,1,1}S_{3,1,1}S_{3,1,1}\oplus     
S_{4,1}S_{2,2,1}S_{2,2,1}\oplus     
S_{4,1}(\mathfrak{S}_{2}\!\cdot\! S_{2,2,1}S_{3,2})
\oplus     
S_{4,1}(\mathfrak{S}_{2}\!\cdot\! S_{2,2,1}S_{3,1,1})
\\&
{}\oplus     
S_{5}S_{2,2,1}S_{2,2,1}
\end{array}
\\
&
\begin{array}{ll}
\left(I_{\kappa_{1}\leq 5}\right)_6 =& S_{2,2,2}S_{3,3}S_{3,3} \oplus
(\mathfrak{S}_{3}\!\cdot\! S_{2,2,2}S_{3,2,1}S_{3,2,1}) \oplus
(\mathfrak{S}_{3}\!\cdot\! S_{3,3}S_{3,2,1}S_{3,2,1}) \oplus          
(S_{3,2,1}S_{3,2,1}S_{3,2,1})^{\oplus 2} \\ &
{}\oplus
S_{4,2}S_{2,2,2}S_{2,2,2} \oplus
S_{4,2}(\mathfrak{S}_{2}\!\cdot\! S_{2,2,2}S_{3,2,1} ) \oplus
S_{4,2}S_{3,2,1}S_{3,2,1} \oplus
S_{4,1,1}(\mathfrak{S}_{2}\!\cdot\! S_{2,2,2}S_{3,3} ) \\ &
{}\oplus
S_{4,1,1}(\mathfrak{S}_{2}\!\cdot\! S_{2,2,2}S_{3,2,1} ) \oplus     
S_{4,1,1}S_{3,2,1}S_{3,2,1} \oplus     
S_{5,1}(\mathfrak{S}_{2}\!\cdot\! S_{2,2,2}S_{3,2,1} ) \oplus          
S_{6}S_{2,2,2}S_{2,2,2}
\end{array}
\\
&
\begin{array}{ll}\left(I_{\kappa_{1}\leq 6}\right)_7 =& 
S_{3,3,1}S_{3,3,1}S_{3,3,1} \oplus 
(\mathfrak{S}_{3}\!\cdot\! S_{3,3,1}S_{3,2,2}S_{3,2,2} )\oplus 
S_{3,2,2}S_{3,3,1}S_{3,3,1} \oplus 
S_{4,2,1}(\mathfrak{S}_{2}\!\cdot\! S_{3,3,1}S_{3,2,2}) \\ &
{}\oplus 
S_{4,2,1}S_{3,2,2}S_{3,2,2} 
\oplus  S_{5,1,1}S_{3,2,2}S_{3,2,2}
\end{array}
\\
&
\begin{array}{ll}
\left(I_{\kappa_{1}\leq 7}\right)_8 =& S_{3,3,2}S_{3,3,2}S_{3,3,2}\oplus S_{4,2,2}S_{3,3,2}S_{3,3,2}
\end{array}
\\
&
\begin{array}{ll}\left(I_{\kappa_{1}\leq 8}\right)_9 =& S_{3,3,3}S_{3,3,3}S_{3,3,3}\end{array}
\end{align*}
\caption{The Schur module decompositions of $\left(I_{\kappa_{1}\leq c_{1}}\right)_{c_{1}+1}$ in the $\mathcal{O}(1,1,1)$ case when $k=m=n=3$.}\label{fig:kappa1notsym}
\end{figure}
Let $n=m=k=3$.  Using LiE~\cite{lie}, we computed
every decomposition of $(I_{\kappa_{1}\leq c_{1}})_{c_{1}+1}$ using
Proposition \ref{prop:kappa1nonsym:decomp}. These decompositions appear in
Figure~\ref{fig:kappa1notsym}. To save space we omit the notation of vector
spaces and tensor products, as in Example~\ref{ex:333}.  Further, we use the
notation $\mathfrak{S}_{s}$ to indicate the direct sum over the (non-redundant)
permutations of the subsequent $s$~Schur modules.
The dimensions of the modules found in Figure~\ref{fig:kappa1notsym} are
\[\begin{tabular}{c|c|c|c|c|c|c|c|cc}
$c_{1}$  & 1 & 2 & 3 & 4 & 5 & 6 & 7 & 8\\ \hline
$\dim\;(I_{\kappa_{1}\leq c_{1}})_{c_{1}+1}$ & 378 &2634 &8910& 12420& 7011& 1296& 81 & 1
\end{tabular}
.\]
Since these dimensions match the dimensions of the space of minors of
$\psi_{1,x}$, as computed in Macaulay2~\cite{M2}, all of the modules must be in the space
of minors.

For $c_1 = 6$, we have $\dim\;(I_{\kappa_{1}\leq
6})_{7}=1296=\binom{9}{7}^2$, and hence we see that all $7\times 7$-minors of $\psi_{1,x}$
are linearly independent. By contrast, if $c_1=5$, the fact that $\dim_{\CC}
(I_{\kappa_{1}\leq 5})_{6}=7011<7056=\binom{9}{6}^2$ tells us that the $6\times
6$ minors of $\psi_{1,x}$ are not all linearly independent. For example, the
upper right and lower left $6\times 6$ minors of
\begin{equation*}
\psi_{1,x}=\begin{pmatrix}
0 & A_3 & -A_2 \\
-A_3 & 0 & A_1 \\
A_2 & -A_1 & 0
\end{pmatrix}
\end{equation*}
are both equal to $\det(A_1) \cdot \det(A_3)$.\end{example}

In the next section we impose partial symmetry on our $3$-tensors.
We remark that we could impose other types of symmetry and this would lead
to different investigations.  For instance, we could restrict attention to
$3$-tensors in any of the following cases: $S^3(U^*)$, $U^*\otimes \bw 2 V^*$, $\bw 3 V^*$, or $S_{2,1}U^*$.  In these cases, it would be straightforward to prove analogues of Lemma~\ref{lem:ranks} and Proposition~\ref{prop:vanish}.  However, if we hope to produce the ideal defining the appropriate secant varieties, then it is less obvious how to generalize Definition~\ref{def:ideal-non-sym}.  It might be interesting to investigate the secant varieties of these other special types of $3$-tensors.

\section{The \texorpdfstring{$\kappa$}{k}-invariant for partially symmetric $3$-tensors}\label{sec:kappa-semi}

For the rest of the paper, we take $W = V$ and focus on partially symmetric
$3$-tensors $x\in U^*\otimes S^2V^* \subset U^* \otimes V^* \otimes V^*$.
By this latter inclusion, we may extend the definition of $\kappa_j(x)$ to
partially symmetric tensors. 
Not only does the $\kappa$-invariant provide a bound for
the rank of $x$, but also for the \defi{partially symmetric rank}, which is defined
as the minimal $r$ such that $x = \sum_{i=1}^r u_i \otimes v_i \otimes v_i$, for
some $u_i \in U^*$ and $v_i \in V^*$.
The set of rank-one partially symmetric tensors is known as the
Segre-Veronese variety of $\PP U^* \times \PP V^*$ embedded by $\Osh(1,2)$.
Therefore, the Zariski closure of the set of partially symmetric tensors of rank at
most~$r$ is the $r$th secant variety of the Segre-Veronese variety. We have the
following analogue of Definition~\ref{def:border-rank}.

\begin{defn}\label{def:partially sym-border-rank}
The \defi{partially symmetric border rank} of $[x]\in \PP(U^{*}\otimes S^{2}V^{*})$ is the minimal $r$ such that $[x]$ is in $\sigma_{r}(\Seg(\PP U^{*}\times v_{2}(\PP V^{*})))$.
\end{defn}

Providing an analogue of the equations from Definition~\ref{def:ideal-non-sym} is a bit more subtle in the case of partially symmetric $3$-tensors.
In fact, it is necessary to
refine the equations if we hope to produce ideals which are radical.
To see this, consider the case where $x$ is a
partially symmetric $3\times n \times n$ tensor. For such an $x$, the
matrix representing $\psi_{1,x}$ has the form
\[
\psi_{1,x}=
\begin{pmatrix}
0&A_3&-A_2\\ -A_3 & 0 & A_1 \\
A_2&-A_1&0
\end{pmatrix}
\]
where the $A_i$ are symmetric $n\times n$-matrices.  Since $\psi_{1,x}$ is a
skew-symmetric matrix, all of the principal minors
in $I_{\kappa_1 \leq c_1}$ are squares.

More generally, if $m = 4j+3$, then $\psi_{2j+1, x} \colon V \otimes
\bw {2j+1} U^* \rightarrow V^* \otimes \bw {2j+2} U^*$ is
represented by a skew-symmetric matrix in appropriate coordinates.  Thus, the
condition that $\psi_{2j+1,x}$ has rank at most an even integer $c_{2j+1}$ is
defined algebraically by the principal $(c_{2j+1}+2) \times (c_{2j+1}+
2)$-Pfaffians of $\psi_{2j+1,x}$. These Pfaffians have degree $c_{2j+1}/2 +1$,
whereas the $(c_{2j+1}+1) \times (c_{2j+1}+ 1)$-minors have degree $c_{2j+1} +
1$.

To encode this skew-symmetry into our matrix equations in the case of partially symmetric tensors, we introduce the
following analogue of Definition~\ref{def:ideal-non-sym}.

\begin{defn}\label{def:ideal-sym}
Let $I_{\kappa_j \leq c_j}$ in~$S^\bullet(U \otimes S^2 V)$ denote the
ideal generated by the $(c_j +2) \times (c_j +2)$-Pfaffians of $\psi_{j,x}$, if
$j = (m-1)/2$, $j$ is an odd integer, and $c_j$ is even. Otherwise,
$I_{\kappa_j\leq c_j}$ denotes the specialization of the
ideal in Definition~\ref{def:ideal-non-sym}.
As in
Definition~\ref{def:ideal-non-sym}, for a vector~$c$, $I_{\kappa \leq c}$ is
defined to be the
ideal generated by the $I_{\kappa_j \leq c_j}$ for all~$j$, and
$\Sigma_{\kappa_i\leq c_i}$ and $\Sigma_{\kappa\leq c}$ are
the subschemes of $\PP (U^*\otimes S^2V^*)$ defined by $I_{\kappa_i\leq
c_i}$ and~$I_{\kappa\leq c}$ respectively.
\end{defn}

Note that for partially symmetric tensors~$x$, we have $\kappa_j(x) =
\kappa_{m-1-j}(x)$ and likewise $I_{\kappa_j(x) \leq c_j} = I_{\kappa_{m-1-j}
\leq c_j}$ for
all~$j$.
With notation in Definition~\ref{def:ideal-sym}, we also obtain the following analogue of Proposition~\ref{prop:vanish}.
\begin{prop}\label{prop:vanish:semi}
Fix $r \geq 1$.  Let $X$ be the Segre-Veronese variety of $\PP (U^*) \times \PP(V^*)$ in $\PP(U^*\otimes S^2V^*)$ and let
 $c$ be the vector defined by $c_j=r\binom{m-1}{j}$ for $0 \leq j \leq m-1$.  Then
$\sigma_r(X)\subseteq
\Sigma_{\kappa\leq c}$.
\end{prop}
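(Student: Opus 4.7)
The plan is to reduce to the non-symmetric analogue Proposition~\ref{prop:vanish} via the inclusion
\[
U^* \otimes S^2 V^* \hookrightarrow U^* \otimes V^* \otimes V^*
\]
obtained by setting $W = V$. Under this inclusion, the Segre-Veronese variety $X$ maps into the Segre variety $\Seg(\PP U^* \times \PP V^* \times \PP V^*)$, and so $\sigma_r(X)$ is contained in $\sigma_r(\Seg(\PP U^* \times \PP V^* \times \PP V^*))$. Moreover, the map $\psi_{j,x}$ defined in Section~\ref{sec:kappa} restricts (in the sense of Definition~\ref{def:ideal-sym}) to precisely the same map on the partially symmetric locus, and the ideal of minors specializes accordingly.

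First, for every $j$ for which $I_{\kappa_j \leq c_j}$ is defined by minors, Proposition~\ref{prop:vanish} immediately gives $\sigma_r(X) \subseteq \Sigma_{\kappa_j \leq c_j}$: a tensor $[x] \in \sigma_r(X)$ has $\kappa_j(x) \leq r\binom{m-1}{j} = c_j$ by Lemma~\ref{lem:ranks} applied to the image in $U^* \otimes V^* \otimes V^*$, and by Definition~\ref{def:ideal-sym} the ideal $I_{\kappa_j \leq c_j}$ is simply the specialization to $S^\bullet(U \otimes S^2 V)$ of the corresponding ideal of minors.

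The remaining case is the ``Pfaffian'' case, where $j = (m-1)/2$ is an odd integer and $c_j$ is even. Here the point is that in suitable coordinates $\psi_{j,x}$ is represented by a skew-symmetric matrix (the partial symmetry of $x$ together with the self-duality $\bw{j+1} U^* \cong \bw j U \otimes \det U^*$ forces this, as already noted in the $m=3$ discussion preceding Definition~\ref{def:ideal-sym}). The minors case already establishes $\rank \psi_{j,x} \leq c_j$ on $\sigma_r(X)$; since $\psi_{j,x}$ is skew-symmetric and $c_j$ is even, the standard fact that a skew-symmetric matrix has rank at most $c_j$ if and only if all its principal $(c_j+2)\times(c_j+2)$ Pfaffians vanish then shows $\sigma_r(X) \subseteq \Sigma_{\kappa_j \leq c_j}$ in this case as well.

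Taking the intersection of these containments over all $j$ yields the desired inclusion $\sigma_r(X) \subseteq \Sigma_{\kappa \leq c}$. There is no real obstacle in this argument; the only subtle point is verifying that $\psi_{j,x}$ is genuinely skew-symmetric (rather than symmetric) in the relevant case, which is the reason for the congruence condition $m \equiv 3 \pmod{4}$ highlighted in Definition~\ref{def:ideal-sym}. Once that sign check is in place, everything reduces to Proposition~\ref{prop:vanish} plus the elementary rank-Pfaffian correspondence for skew-symmetric matrices.
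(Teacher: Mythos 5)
Your proof is correct, and it matches the argument the paper implicitly has in mind: the authors state Proposition~\ref{prop:vanish:semi} without proof as ``the analogue of Proposition~\ref{prop:vanish},'' and the intended argument is exactly the reduction you describe. You correctly identify the one step that goes beyond a direct invocation of Lemma~\ref{lem:ranks}: when $j=(m-1)/2$ is odd and $c_j$ is even, $I_{\kappa_j\leq c_j}$ is generated by $(c_j+2)\times(c_j+2)$ Pfaffians rather than $(c_j+1)\times(c_j+1)$ minors, so one needs the skew-symmetry of $\psi_{j,x}$ on the partially symmetric locus together with the standard rank--Pfaffian equivalence (for a skew-symmetric matrix, rank $\leq c_j$ with $c_j$ even is equivalent to the vanishing of all principal $(c_j+2)\times(c_j+2)$ Pfaffians). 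Both of these ingredients are already recorded in the paper in the discussion preceding Definition~\ref{def:ideal-sym}, so there is no gap.
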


\begin{remark}
Although the rest of the paper concerns partially symmetric $3 \times n \times n$ tensors, we
note that the equations given in Definition~\ref{def:ideal-sym} would be
insufficient to generate the ideal of the secant varieties for $m \geq 4$.
For example, consider the case of partially symmetric $4\times n\times n$ tensors.
If $x$ is such a tensor, then the matrix representing $\psi_{2,x}$
has the form
\begin{equation*}\psi_{2,x}=
\begin{pmatrix}
-A_4&0&0&\mathbf{0}&\mathbf{A_3}&\mathbf{-A_2}\\
0&-A_4&0&\mathbf{-A_3}&\mathbf{0}&\mathbf{A_1}\\
0&0&-A_4&\mathbf{A_2}&\mathbf{-A_1}&\mathbf{0}\\
A_1&A_2&A_3&0&0&0
\end{pmatrix}
\end{equation*}
where each $A_i$ is an $n\times n$ symmetric matrix. If $x$ has border rank at
most $r$, then $\psi_{1,x}$ will have rank at most $3r$ by
Proposition~\ref{prop:vanish:semi}. However, the bold submatrix in the upper
right will have rank at most $2r$. Moreover, since the bold submatrix is
skew-symmetric, the condition that it has rank at most~$2r$ is given by the
vanishing of its $(2r+2)\times (2r+2)$-principal Pfaffians. Thus, the defining
ideal of the $r$th secant variety must contain these Pfaffians, as well as $3$
other sets of Pfaffians which arise by symmetry. Since the Pfaffians have
degree $r+1$, they can not be in the ideal of the $(3r+1) \times (3r+1)$-minors.

In effect, these Pfaffians amount to the generators of $I_{\kappa_1 \leq 2r}$
applied to a $3 \times n \times n$ subtensor. In the literature on tensors, this
process for producing equations on larger tensors by applying known equations to all
subtensors is known as \defi{inheritance}. See~\cite[\S2.1]{lm2} for a precise
definition in the language of representation theory. The above analysis shows
how the inheritance of $\kappa$-equations can produce new equations beyond the
$\kappa$-equations themselves. 
\end{remark}

\begin{prop}\label{prop:kappa0:decomp}
As a Schur module, we have the following decomposition of the generators $I_{\kappa_0\leq r}$ into irreducible representations of $\GL(U)\times \GL(V)$
\[
(I_{\kappa_{0}\leq r})_{r+1} = \bigoplus_{|\pi|=r+1} S_{\pi}U \otimes S_{\pi' + 1^{r+1}}V,
\]
where $\pi'$ is the conjugate partition to $\pi$, and $1^{r+1} = (1, \ldots, 1)$
is the partition with $r+1$ parts.
\end{prop}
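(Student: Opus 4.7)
My plan is to compute the Schur module decompositions of the source and target of the relevant minor map and apply Schur's Lemma, mirroring the strategy of Proposition~\ref{prop:kappa1nonsym:decomp}. By (the partially symmetric analogue of) Lemma~\ref{lem:kapp2schur}, $(I_{\kappa_0 \leq r})_{r+1}$ is the image of the $\GL(U) \times \GL(V)$-equivariant map
\[
\Phi_0 \colon \bw{r+1} V \otimes \bw{r+1}(V \otimes U) \longrightarrow S^{r+1}(U \otimes S^2 V)
\]
sending decomposable tensors to the corresponding $(r+1)\times(r+1)$-minors of $\psi_{0,x}$. Using the skew Cauchy formula for $\bw{r+1}(V\otimes U)$ combined with Pieri's rule for $\bw{r+1}V\otimes S_\lambda V$, the source decomposes as $\bigoplus_{\lambda,\mu} S_\mu V \otimes S_{\lambda'} U$ summed over $|\lambda|=r+1$ and partitions $\mu$ with $\mu/\lambda$ a vertical strip of $r+1$ cells. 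The target decomposes via the Cauchy formula as $\bigoplus_{|\pi|=r+1} S_\pi U \otimes S_\pi(S^2 V)$.

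By Schur's Lemma, the image of $\Phi_0$ consists of irreducibles occurring in both source and target. Matching the $U$-components forces $\pi = \lambda'$, so the candidate $V$-summands are partitions $\mu$ satisfying both (i) $\mu/\pi'$ is a vertical strip of $r+1$ cells, and (ii) $S_\mu V$ appears in the plethysm $S_\pi(S^2 V)$. I claim the unique such $\mu$ is $\pi'+1^{r+1}$. For (i), any vertical strip of $r+1$ on $\pi'$ uses $r+1$ distinct rows: the unique configuration lying entirely in rows $1,\dots,r+1$ is the leftmost column of height $r+1$, giving $\mu = \pi'+1^{r+1}$ of length exactly $r+1$, while any other arrangement must extend beyond row $r+1$ and thereby produces length $>r+1$. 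For (ii), the classical plethysm length bound asserts that $S_\mu V$ occurring in $S_\pi(S^2 V)$ forces $\operatorname{length}(\mu) \leq |\pi| = r+1$, ruling out the remaining candidates.

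The main obstacle is establishing this plethysm length bound. I plan to do so by realizing $S_\pi(S^2V)$ inside $V^{\otimes 2(r+1)}$ as a joint isotypic component under the hyperoctahedral group $(\ZZ/2)^{r+1}\rtimes S_{r+1}\subseteq S_{2(r+1)}$, and checking that Specht modules $M^\mu$ of $S_{2(r+1)}$ with $\operatorname{length}(\mu) > r+1$ do not contain the required $(\operatorname{triv}\otimes\chi^\pi)$-isotypic, since the paired transposition generators of $(\ZZ/2)^{r+1}$ act incompatibly on sufficiently antisymmetric Specht modules. Finally, the multiplicity-one appearance of $S_{\pi'+1^{r+1}} V$ in $S_\pi(S^2V)$ can be verified by a direct character calculation, and the non-vanishing of $\Phi_0$ on $S_\pi U\otimes S_{\pi'+1^{r+1}}V$ is witnessed by exhibiting an explicit $(r+1)\times(r+1)$-minor realizing a highest-weight vector. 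Combining these steps identifies the image of $\Phi_0$ precisely with $\bigoplus_{|\pi|=r+1}S_\pi U\otimes S_{\pi'+1^{r+1}}V$.
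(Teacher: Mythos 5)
Your overall strategy matches the paper's: both use Lemma~\ref{lem:kapp2schur} to realize $(I_{\kappa_0\le r})_{r+1}$ as the image of $\Phi_0$, both decompose the source via the skew Cauchy formula followed by Pieri, both decompose the target via the Cauchy formula, and both conclude with Schur's Lemma plus an explicit highest-weight vector to verify non-vanishing. The one genuine divergence is in how you propose to establish the plethysm length bound, namely that $S_\mu V$ appearing in $S_\pi(S^2V)$ with $|\pi|=r+1$ forces $\mu$ to have at most $r+1$ parts. The paper proves this as Lemma~\ref{lem:mostparts} essentially in one line: $S_\pi(S^2A)\subset (S^2A)^{\otimes(r+1)}$, and inductively applying the Pieri rule shows each tensoring by $S^2A$ adds at most one new row. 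Your proposed hyperoctahedral route --- realizing $S_\pi(S^2V)$ as the $(\operatorname{triv}\boxtimes\chi^\pi)$-isotypic piece of $V^{\otimes 2(r+1)}$ under $(\ZZ/2)^{r+1}\rtimes S_{r+1}$ --- is correct in principle, but the step you actually need (that $M^\mu$ has no $(\ZZ/2)^{r+1}$-invariant vectors when $\operatorname{length}(\mu)>r+1$) is asserted via ``paired transposition generators act incompatibly on sufficiently antisymmetric Specht modules,'' which is a gesture rather than an argument. Making it precise amounts to the vanishing of the Kostka number $K_{\mu,2^{r+1}}$, which is again an inductive Pieri statement, so your route circles back to the same content as Lemma~\ref{lem:mostparts} with considerably more overhead. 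You should simply prove (or cite) that lemma directly. Two minor comments: the ``multiplicity-one in $S_\pi(S^2V)$'' check is unnecessary, since multiplicity one already follows from the Pieri decomposition of the source once the explicit minor is shown nonzero; and the phrase ``leftmost column of height $r+1$'' is slightly misleading --- the added cells sit at positions $(i,\pi'_i+1)$, not all in column one --- though your underlying counting argument (one cell per row in rows $1,\dots,r+1$) is correct.
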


In the proof, we will need the following observation.

\begin{lemma}\label{lem:mostparts}
Suppose $\pi$ is a partition of $d$ and suppose $A$ is a vector space.  If $S_{\lambda}A$ is a module occurring in the decomposition of $S_{\pi}(S^{2} A)$ then $\lambda$ has at most $d$ parts.
\end{lemma}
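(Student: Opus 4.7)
\medskip

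\noindent\textbf{Proof plan.} My plan is to deduce the length bound from Schur--Weyl duality. Since $|\pi|=d$, the plethysm $S_\pi(S^2A)$ appears as the $[\pi]$-isotypic component in the $\GL(A)\times\mathfrak{S}_d$-decomposition
\[
(S^2 A)^{\otimes d}=\bigoplus_{|\pi|=d} S_\pi(S^2 A)\otimes [\pi],
\]
where $[\pi]$ denotes the Specht module. In particular $S_\pi(S^2A)$ embeds as a $\GL(A)$-submodule of $(S^2 A)^{\otimes d}$, so it suffices to prove the length bound $\ell(\lambda)\le d$ for every irreducible summand $S_\lambda A$ appearing in $(S^2 A)^{\otimes d}$.

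Next, I would realize $(S^2A)^{\otimes d}$ as the subspace of $H$-invariants in $A^{\otimes 2d}$, where $H=(\mathfrak{S}_2)^d\subset\mathfrak{S}_{2d}$ is the Young subgroup swapping the positions within each pair $\{2i-1,2i\}$ for $1\le i\le d$. Schur--Weyl duality gives the $\GL(A)\times\mathfrak{S}_{2d}$-decomposition
\[
A^{\otimes 2d}=\bigoplus_{|\lambda|=2d} S_\lambda A\otimes [\lambda],
\]
and taking $H$-invariants commutes with the $\GL(A)$-action, yielding
\[
(S^2 A)^{\otimes d}=\bigoplus_{|\lambda|=2d} S_\lambda A\otimes [\lambda]^{H}.
\]
By Frobenius reciprocity, $\dim [\lambda]^{H}$ equals the Kostka number $K_{\lambda,(2^d)}$.

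To finish, I would invoke the standard fact that $K_{\lambda,(2^d)}>0$ if and only if $\lambda\trianglerighteq (2^d)$ in dominance order. For $|\lambda|=2d$, the dominance condition at step $k=d$ reads $\lambda_1+\cdots+\lambda_d\ge 2d=|\lambda|$, which forces $\lambda_i=0$ for all $i>d$. Hence $\ell(\lambda)\le d$, and the lemma follows.

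I do not anticipate a substantive obstacle, as every step is a direct appeal to well-known tools (Schur--Weyl duality, Frobenius reciprocity, and the dominance criterion for positivity of Kostka numbers). The only real choice is picking the right embedding $S_\pi(S^2A)\hookrightarrow (S^2A)^{\otimes d}$, so that the length bound for the ambient tensor power transfers to the plethysm.
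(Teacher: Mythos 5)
Your proof is correct, and it takes a genuinely different route from the paper after the common first step. Both arguments begin by embedding $S_\pi(S^2A)$ into $(S^2A)^{\otimes d}$; you justify this via Schur--Weyl for $\GL(S^2A)\times\mathfrak{S}_d$, while the paper simply asserts the inclusion. From there the paper finishes with a short inductive Pieri-rule argument: tensoring a Schur module $S_\mu A$ with $S^2A=S_{(2)}A$ adds a horizontal strip of two boxes, which can increase the number of rows by at most one, so after $d$ steps starting from $S_{(2)}A$ every summand has at most $d$ parts. You instead pass to $A^{\otimes 2d}$, realize $(S^2A)^{\otimes d}$ as the $H$-invariants for $H=(\mathfrak{S}_2)^d$, compute the $\GL(A)$-multiplicity of $S_\lambda A$ as the Kostka number $K_{\lambda,(2^d)}$ via Frobenius reciprocity, and then extract the length bound from the dominance-order criterion for positivity of Kostka numbers. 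Your approach is heavier machinery but yields strictly more information (the exact multiplicities, not just a length bound), while the paper's Pieri induction is shorter and more elementary. Either proof is fully rigorous and both correctly reduce the dominance/horizontal-strip constraint to the observation that $\lambda_1+\cdots+\lambda_d\ge 2d=|\lambda|$ forces $\ell(\lambda)\le d$.
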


\begin{proof}
Since $\pi$ is a partition of~$d$, we have an inclusion
$S_{\pi}(S^{2}A) \subset (S^{2}A)^{\otimes d}$.
By inductively applying the Pieri formula to $(S^{2}A)^{\otimes d} =
(S^{2}A)^{\otimes d-1}\otimes S^{2}A$, we see that every module in the
decomposition of $(S^2A)^{\otimes d}$ can have at most $d$~parts.
\end{proof}  

\begin{proof}[Proof of Proposition \ref{prop:kappa0:decomp}]
After choosing bases of $U$ and $V$, we may view the map $\psi_{0,x}$
as a matrix of linear forms in $S^\bullet(U\otimes S^2V)$.
By Lemma~\ref{lem:kapp2schur}, $(I_{\kappa_{0}\leq r})_{r+1}$ is
the image of the $\GL(U)\times \GL(V)$-equivariant morphism
\begin{equation*}
\Phi_0\colon \bw {r+1} V \otimes \bw {r+1} \left(U\otimes V \right) \to
S^{r+1}(U\otimes S^{2}(V)),
\end{equation*}
which sends an indecomposable basis element in the source to the corresponding minor
 in the polynomial ring.  The first step of our proof is to show
that only those representations of the form $S_{\pi}U \otimes S_{\pi' +
1^{r+1}}V$ appear in both the source and target of~$\Phi_0$.  By Schur's Lemma,
this will provide a necessary condition on the representations which can appear
in the decomposition of~$(I_{\kappa_{0}\leq r})_{r+1}$.  The second step of our
proof is to show that each such representation actually arises; for this, we
produce an explicit nonzero minor of $\psi_{0,x}$ that is in the image
of~$\Phi_{0}$ restricted to  $S_{\pi}U \otimes S_{\pi' + 1^{r+1}}V$, so that
$\Phi_{0}$ restricted to  $S_{\pi}U \otimes S_{\pi' + 1^{r+1}}V$
is nonzero.

For the first step, 
suppose that $S_{\pi}U \otimes S_{\lambda}V$ is a module in
$S^{r+1}(U\otimes S^{2}V)$.  
If we apply the Cauchy decomposition formula to $S^d(U \otimes
S^2V )$, and consider the resulting modules as $\GL(U)$-representations,
then we must have
$S_{\pi}U \otimes S_{\lambda}V$ contained in the summand $S_{\pi}U \otimes
S_{\pi}(S^{2}V)$. In particular we must have $S_{\lambda}V \subset
S_{\pi}(S^{2}V)$. 
Therefore, by Lemma \ref{lem:mostparts},
$\lambda$ has at most $r+1$ parts.

On the other hand, we can use the skew Cauchy formula to decompose
\begin{equation*}
\bw {r+1}V\otimes \bw {r+1}(U\otimes V)
= \bw {r+1}V\otimes \bigoplus_{|\pi|=r+1}S_{\pi}U \otimes S_{\pi'}V.
\end{equation*}
Applying the Pieri rule to $\bw{r+1} V \otimes S_{\pi'} V$, we see that all of the summands
have more than $r+1$ parts except for $S_{\pi' + 1^{r+1}} V$. Therefore, the
decomposition of $I_{\kappa_0 \leq r}$ must consist only of the modules
$S_{\pi}U \otimes S_{\pi' + 1^{r+1}}V$, where $\pi$ is a partition of $r+1$, and
each such module can occur with multiplicity at most one.

For the second step, fix $\pi$ a partition of $r+1$.  Suppose that
$u_{1},\ldots,u_{m}$ is our ordered basis of $U$ and $v_{1},\ldots,v_{n}$ is
our ordered basis of $V$. Consider the
indecomposable basis element
\begin{multline*}
z_{\pi}=
(v_{1}\wedge \dots\wedge v_{r+1}) \otimes
 \big( (u_{1}\otimes v_{1})\wedge \dots\wedge (u_{1}\otimes v_{\pi_{1}})
\wedge (u_{2}\otimes v_{1})\wedge \dots\wedge (u_{2}\otimes v_{\pi_{2}}
) \\
\wedge \cdots
 \wedge (u_{m}\otimes v_{1})\wedge \dots\wedge (u_{m}\otimes v_{\pi_{m}}
)\big)
\end{multline*}
in 
$\bw {r+1} V \otimes \bw {r+1} \left(U\otimes V \right)$.  We claim that
$z_{\pi}$ is in $S_{\pi}U \otimes S_{\pi' +1^{r+1}}V$, and, in fact is a non-zero highest
weight vector in that representation.
The vector $z_\pi$ is non-zero because $z_\pi$ is the tensor product of two
tensors, each constructed as an
exterior product of linearly independent tensors and hence non-zero.
It is clear that $z_\pi$ has weights $\pi$ and $\pi' + 1^{r+1}$ in $U$ and~$V$
respectively, with respect to our chosen bases. Moreover,
replacing $v_i$ by~$v_j$ or $u_i$ by~$u_j$, with $j <
i$ in either case, would result in a repeated term in the exterior product, and
thus any raising operator would send $z_\pi$ to zero, so $z_{\pi}$ is a highest weight vector.

Let $M_{\pi}$ be the submatrix of the block matrix $\psi_{0,x}^T=\begin{pmatrix} A_1 & \cdots & A_m
\end{pmatrix}$ defined by selecting the first $r+1$ rows and the
first $\pi_i$ columns of the $i$th block for each $i \leq n$.
Then the map $\Phi_0$ sends $z_{\pi}$ to the determinant of $M_{\pi}$.
For appropriate choices for $A_{i}$, we can make $M_{\pi}$ equal the identity
matrix, and therefore $\Phi(z_{\pi})=\det(M_{\pi})$ is nonzero.
\end{proof}

In the case $\dim(U)=3$, we similarly produce a formula for the decomposition of
the modules generating $I_{\kappa_{1}\leq 2r}$ in
Proposition~\ref{prop:kappa1sym:decomp}.

\begin{remark}
Taken together, PropoPropositionssitions~\ref{prop:kappa0:decomp} and \ref{prop:m-equals-1} provide a complete Schur module description of the generators of the ideal of any secant variety of the $\mathbb P^1\times \mathbb P^{n-1}$ embedded by~$\mathcal O(1,2)$.
Similarly, Propositions~\ref{prop:kappa0:decomp} and
\ref{prop:kappa1sym:decomp}, together with Theorem~\ref{thm:main}, provide a complete Schur module description of the generators of the ideal of the $r$th  secant variety of $\mathbb P^2\times \mathbb P^{n-1}$ embedded by $\mathcal O(1,2)$ for $r$ at most~$5$.
\end{remark}

\begin{prop}\label{prop:kappa1sym:decomp}
Suppose that $\dim(U)$ is~$3$.
As a Schur module, we have the following decomposition of the generators $I_{\kappa_1\leq 2r}$ into irreducible representations of $\GL(U)\times \GL(V)$
\[(I_{\kappa_{1}\leq 2r})_{r+1} 
= \bigoplus_{|\pi|=r+1} S_{\pi}U \otimes S_{(3)^{r+1} - \pi'}V,\]
where $\pi'$ is the conjugate partition to $\pi$. 
In order for the summand to be non-zero, $\pi$ must have at most $3$~parts, and
$\pi_3$ must be at least $m - r+1$ (if the latter is positive).
\end{prop}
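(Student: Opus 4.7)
The proof plan will closely parallel the proof of Proposition~\ref{prop:kappa0:decomp}, with three main ingredients: setting up an appropriate equivariant map, decomposing its source into irreducibles using the skew Cauchy formula and a rectangle-complement identity, and verifying non-vanishing on each isotypic component by exhibiting explicit highest weight vectors.

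First, I would establish the Pfaffian analogue of Lemma~\ref{lem:kapp2schur}. Set $A := V \otimes U^*$. Because $\dim U = 3$, there is a natural identification $V^* \otimes \bw{2} U^* \cong A^* \otimes \det U^*$, under which $\psi_{1,x}$ becomes a skew form on $A$ valued in $\det U^*$, i.e., an element of $\bw{2} A^* \otimes \det U^*$. Taking $(r+1)$-fold wedges and pairing, the principal $(2r+2)\times(2r+2)$ Pfaffians of $\psi_{1,x}$ arise as the image of a $\GL(U) \times \GL(V)$-equivariant map
\[
\Phi_1 \colon \bw{2r+2} A \otimes (\det U)^{r+1} \longrightarrow S^{r+1}(U \otimes S^2 V),
\]
and by construction $(I_{\kappa_1 \leq 2r})_{r+1}$ is exactly the image of $\Phi_1$.

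Second, I would decompose the source. The skew Cauchy formula gives
\[
\bw{2r+2}(V \otimes U^*) \;=\; \bigoplus_{|\lambda|=2r+2} S_\lambda V \otimes S_{\lambda'} U^*,
\]
and for the twist to be non-trivial we need $\lambda'$ to fit in a $3 \times (r+1)$ rectangle. Applying Remark~\ref{rmk:repdual} with $l = r+1$ yields $S_{\lambda'} U^* \otimes (\det U)^{r+1} \cong S_{(r+1)^3 - \lambda'} U$. Setting $\pi := (r+1)^3 - \lambda'$, we have $|\pi| = r+1$ and $\pi$ has at most $3$ parts, and the rectangle-complement identity $((r+1)^3 - \pi)' = 3^{r+1} - \pi'$ recovers $\lambda = 3^{r+1} - \pi'$. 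Hence the source of $\Phi_1$ decomposes as
\[
\bigoplus_{\substack{|\pi|=r+1\\ \pi \text{ has at most 3 parts}}} S_\pi U \otimes S_{3^{r+1} - \pi'} V,
\]
with each summand appearing with multiplicity one. The constraint that $\pi_3 \geq r+1-n$ (when positive) is equivalent to $S_{3^{r+1} - \pi'} V$ being non-zero, since the length of $3^{r+1} - \pi'$ equals $r+1 - \pi_3$.

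Third, since each irreducible appears in the source with multiplicity one, Schur's lemma reduces the question to showing that $\Phi_1$ restricted to each summand is non-zero. Following the strategy of Proposition~\ref{prop:kappa0:decomp}, for each valid $\pi$ I would construct an explicit highest weight vector $z_\pi \in S_\pi U \otimes S_{3^{r+1}-\pi'} V$ realized as an indecomposable tensor, built from a carefully chosen $(2r+2)$-element subset of the basis $\{u_i \otimes v_j\}$ of $A$ whose weights in $U$ and $V$ agree with $\pi$ and $3^{r+1} - \pi'$ respectively. To verify $\Phi_1(z_\pi) \neq 0$, I would specialize the symmetric matrices $A_1, A_2, A_3$ so that the corresponding principal submatrix of $\psi_{1,x}$ becomes a non-degenerate block-diagonal skew matrix (a direct sum of $2 \times 2$ standard symplectic blocks), whose Pfaffian is then a non-zero monomial in the chosen entries.

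The main obstacle is the third step: unlike the determinantal case of Proposition~\ref{prop:kappa0:decomp}, where one can trivially tune the $A_i$ so the selected submatrix is the identity, here the rigid skew-symmetric structure of $\psi_{1,x}$ and the prescribed weight data constrain the choice of principal subset substantially. Identifying, for each $\pi$, a combinatorial recipe that simultaneously (i) realizes the highest weights $\pi$ and $3^{r+1} - \pi'$, (ii) produces a principal submatrix compatible with the skew block structure, and (iii) yields a non-zero specialization of the Pfaffian, is the heart of the argument; once this recipe is in hand, the proposition follows immediately from Schur's lemma and the multiplicity-one decomposition above.
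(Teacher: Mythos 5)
Your proposal follows the paper's proof essentially step for step: reinterpreting $\psi_{1,x}$ as a $\bw{3} U^*$-valued skew form on $V \otimes U^*$, building the equivariant map $\Phi_1$ from the $(2r+2)\times(2r+2)$ Pfaffians, decomposing its source by the skew Cauchy formula together with the rectangle-complement duality of Remark~\ref{rmk:repdual}, and then concluding by Schur's lemma once non-vanishing is checked on each multiplicity-one summand. The step you flag as the main obstacle --- producing, for each admissible $\pi$, a highest weight vector $z_\pi$ and a specialization of the $A_i$ making the selected principal Pfaffian nonzero --- is exactly what the paper supplies, and along precisely the lines you anticipate: $z_\pi$ is the wedge of the first $\lambda_3$, $\lambda_2$, $\lambda_1$ basis vectors $u_i^*\otimes v_j$ in the three $U^*$-slots (with $\lambda=(r+1)^3-\pi$, so $\lambda_1=\pi_1+\pi_2$, $\lambda_2=\pi_1+\pi_3$, $\lambda_3=\pi_2+\pi_3$), and the $B_i$ are taken to be shifted identity blocks so that $M_\pi$ becomes the standard full-rank skew-symmetric matrix.
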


\begin{proof}
We consider the Pfaffians of a matrix representing the map
$\psi_{1,x}\colon V\otimes  U^* \to V^*\otimes \bw {2} U^*$.
In order to view $\psi_{1,x}$ as a skew-symmetric transformation,
we identify $\bw 2 U^*$ in the target with~$U \otimes \bw 3 U^*$. Then,
we can view $\psi_{1,x}$ as a skew-symmetric form on~$V \otimes U^*$,
taking values in $\bw 3 U^*$. Equivalently, a choice of a nonzero element
in~$\bw 3 U$ gives a $\CC$-valued skew-symmetric form.


The remainder of our proof essentially follows the same two steps as the proof
of Proposition~\ref{prop:kappa0:decomp}. The space of $(2r+2) \times
(2r+2)$-Pfaffians of a skew-symmetric form on $V \otimes U^*$ is isomorphic to
$\bw {2r+2} (V \otimes U^*)$. Therefore,  similar to
Lemma~\ref{lem:kapp2schur},
$(I_{\kappa_{1}\leq 2r})_{r+1}$ is the image of the map
\[
\Phi_1\colon\ \bw {2r+2}\left( V\otimes U^* \right) 
\otimes (\bw 3 U)^{r+1}
\to 
S^{r+1}\left(U\otimes S^2(V)\right),
\]
which sends an indecomposable basis element to the
corresponding Pfaffian.
Note that the power of $r+1$ in $(\bw 3 U)^{r+1}$ is because the Pfaffian has
degree $r+1$.
First, we show that only modules of the form $S_{\pi}U
\otimes S_{(3)^{r+1}-\pi'} V$ can arise as a representation in both the source and target
of~$\Phi_1$.  Second, we consider $\Phi_1$ restricted to $S_{\pi}U \otimes
S_{(3)^{r+1}-\pi'} V$ and we produce a Pfaffian in the image and explicitly show
that it is non-zero. By Schur's Lemma, this will show that every such
representation actually arises in the decomposition of $(I_{\kappa_{1}\leq 2r})_{r+1}$.

For the first step, we use the skew Cauchy formula to decompose the source
of~$\Phi_1$ as
\begin{equation*}
\bw {2r+2}(U^{*} \otimes V)\otimes (\bw {3}U)^{r+1}  = 
\bigoplus_{\lvert\lambda\rvert = 2r+2}
S_{\lambda}U^{*}  \otimes S_{\lambda'}V 
 \otimes (\bw {3}U)^{r+1}
= \bigoplus_{\lvert\lambda\rvert = 2r+2}
S_{(r+1)^{3}-\lambda}U \otimes
S_{\lambda'}V
,\end{equation*}
where we have used the duality formula from Remark~\ref{rmk:repdual} for the second
equality.  Every module in the source of $\Phi_1$ is thus
of the form $S_{(r+1)^{3}-\lambda}U \otimes S_{\lambda'}V$, where $\lambda$ is a partition of $2r+2$.
We make the substitution $\lambda =
(r+1)^{3}-\pi$ to arrive at the expression in the statement of the proposition.

For the second step, we explicitly produce a non-zero Pfaffian of $\psi_{1,x}$ in the
image of $\Phi_{1}$ restricted to $S_{\pi'}U \otimes S_{(r+1)^{3} - \pi}V$, and
thus confirm that every module of the form $S_{\pi'}U \otimes S_{(r+1)^{3} -
\pi}V$ (for appropriate $\pi$) occurs in the decomposition of
$(I_{\kappa_{1}\leq 2r})_{r+1}$.  Suppose that $u_1,u_2, u_3$ is our ordered
basis for~$U$ and $v_1, \dots, v_n$ is our ordered basis for~$V$. 
Let $\pi=(\pi_1,\pi_2,\pi_3)$ be a a partition of $r+1$ with no more than three
parts, and let $\lambda = (r+1)^{3} - \pi = 
(r+1-\pi_{3},r+1-\pi_{2},r+1-\pi_{1})$, as before.
Consider the element
\begin{multline*}
z_{\pi} = \big(({u}_{1}^*\otimes v_{1})\wedge \ldots \wedge (u^*_{1}\otimes
v_{\lambda_{3}})
\wedge
(u^*_{2}\otimes v_{1})\wedge \ldots \wedge (u^*_{2}\otimes
v_{\lambda_{2}})
\wedge
({u}_{3}^*\otimes v_{1})\wedge \ldots \wedge (u_{3}^*\otimes
v_{\lambda_{1}})\big) \\
\otimes (u_1 \wedge u_2 \wedge u_3)^{\otimes r+1}
\end{multline*}
in $\bw {2r+2}(U^* \otimes V) \otimes (\bw 3 U)^{r+1}$, where the
$u^*_{i}$ form the dual basis to the~$u_i$.  Note that $z_\pi$ is non-zero
since the vectors in each exterior product are linearly independent.  Now, we will
show that $z_\pi$ is a highest weight vector in $S_\pi U \otimes S_{\lambda'}V$.

First, we claim that $z_{\pi}$ has weight $(\pi,\lambda')$. By counting the
occurrences of $v_i$ in~$z_\pi$, it is clear that the weight
in the $V$-factor is $\lambda'$.  For the $U$ factor, we note that the weight
of~$u^*_i$ is the negative of that of $u_i$, so that $z_\pi$  has weight
$(r+1 - \lambda_3, r+1 - \lambda_2, r+1 - \lambda_1)$, which is equal to~$\pi$.

Second, we must show that any raising operator will send $z_{\pi}$ to zero,
which will imply that $z_{\pi}$ is a highest weight vector.
In the $V$ factor, sending $v_{i}$ to $v_{j}$ with $j<i$ would force a repeated
vector in the exterior product. Likewise, a raising operator applied to the $U$
factor would send $u^*_{i}$ to $u^*_{j}$ with $j > i$, which would,
again, create a repeated factor in the exterior product. 

Finally, we check that $\Phi_1(z_{\pi})\ne 0$. Let $M_{\pi}$ be the
principal submatrix obtained from $\psi_{1,x}$ by selecting the rows and columns
with indices $\{1, \dots,\lambda_{3}, n+1, \dots, n+\lambda_{2}, 2n+1, \dots,
2n+\lambda_{1}\}$.  Then $\Phi_1(z_{\pi})$ equals the Pfaffian of $M_{\pi}$.  
To check that the Pfaffian of $M_{\pi}$ is nonzero, it suffices to 
produce a
specialization of $M_{\pi}$ which has full rank.  Note that if
$B_i$ is the appropriate submatrix from the upper-left corner of $A_i$, then
$M_{\pi}$ has the
following shape
\begin{equation*}
M_{\pi}=
\bordermatrix{
 &\lambda_3 & \lambda_2&\lambda_1\cr
 \lambda_3&0&B_3&-B_2\cr
 \lambda_2&-B_3^t&0&B_1\cr
 \lambda_1&B_2^t&-B_1^t&0
}.
\end{equation*}
We have $\lambda_{1} = \pi_{1}+\pi_{2}$, $\lambda_{2} = \pi_{1}+\pi_{3}$,
$\lambda_{3} = \pi_{2}+\pi_{3}$.
If we specialize the $A_i$ such that the $B_i$ are as follows
\begin{equation*}
B_1=
\bordermatrix{
 &\pi_{2} &\pi_1\cr
\pi_{1}&0&\Id_{\pi_1}\cr
 \pi_3&0&0
}, \quad
B_2=
\bordermatrix{
 &\pi_2 & \pi_{1}\cr
\pi_{3} &0&0\cr
\pi_2&-\Id_{\pi_2}&0
},
\text{ and }
B_3=
\bordermatrix{
 &\pi_1 & \pi_{3}\cr
\pi_3&0&\Id_{\pi_3}\cr
\pi_{2} &0&0
},
\end{equation*}
then the specialization of $M_{\pi}$ has full rank, since it is the standard
block skew-symmetric matrix.
\end{proof}

\begin{example}
Consider the case $n=4$ and $c_1=10$.  Since $\psi_{1,x}$ is a 
skew-symmetric
$12\times 12$ matrix, we expect the ideal $I_{\kappa_1\leq 10}$ to be a
principal ideal, generated by a polynomial in $S^6(U\otimes S^2(V))$.  Applying
Proposition~\ref{prop:kappa1sym:decomp}, we must have a sum over partitions~$\pi$
of~$6$ such that $(3)^6 - \pi'$ has at most $4$ parts. This forces $\pi'$ to equal
$(3,3)$, and thus $\pi=(2,2,2)$. The generators of $I_{\kappa_1 \leq
10}$ are therefore equal to the $1$-dimensional representation
$S_{2,2,2}(U) \otimes S_{3,3,3,3}(V)$, corresponding to the Pfaffian of $\psi_{1,x}$.
\end{example}

\begin{example}\label{ex:363}
Consider the case $n=4$ and $c=(3,6,3)$, which we revisit in
Example~\ref{ex:23}.  Propositions~\ref{prop:kappa0:decomp}
and~\ref{prop:kappa1sym:decomp} give us the decompositions:
\begin{align*}
(I_{\kappa_0\leq 3})_{4} &= S_{2,2} S_{3,3,1,1}
\oplus  \mathbf{S_{2,1,1} S_{4,2,1,1} }
\oplus  S_{3,1} S_{3,2,2,1} 
\oplus  S_{4} S_{2,2,2,2} 
,
\\
(I_{\kappa_1\leq 6})_{4} &= S_{2,2} S_{3,3,1,1} 
\oplus  \mathbf{S_{2,1,1} S_{3,3,2}}
\oplus  S_{3,1} S_{3,2,2,1} 
\oplus  S_{4} S_{2,2,2,2} 
.\end{align*}
Both modules are $495$-dimensional and consist of quartic polynomials. The ideal
$I_{\kappa\leq (3,6,3)}$, which equals $I_{\kappa_6\leq 6}+I_{\kappa_0\leq 3}$
by definition, is generated by the $630$-dimensional space of quartics obtained
by taking the sum of the above decompositions.  Notice that, due to the highlighted modules in the above decompositions, neither $I_{\kappa_0\leq 3}$ nor $I_{\kappa_1\leq 6}$ belongs to the other. 
In particular, the $4\times 4$-minor formed by taking columns $1$, $2$, $5$,
and~$9$ from the flattening $\psi_{0,x}$, which is the transpose of~(\ref{eq:flattening}), is not in the ideal of
Pfaffians.  On the other hand, the Pfaffian formed by taking the rows and columns
of~(\ref{eq:skew-sym}) with indices $1$, $2$, $5$, $6$, $7$, $9$, $10$, and~$11$
is not contained in the ideal of the minors.
\end{example}


Notice that the formulas in Propositions~\ref{prop:kappa0:decomp}
and~\ref{prop:kappa1sym:decomp} are multiplicity free, unlike, for example, the ideal
generators computed in Example~\ref{ex:non-sym-333}.

\section{Subspace varieties of partially symmetric tensors}\label{sec:subspace}

We next give a geometric interpretation for the varieties $\Sigma_{\kappa_0 \leq
r}$. These are the partially symmetric analogues of the subspace varieties defined
in~\cite[Defn.~1]{lw}; Proposition~\ref{prop:subspace} forms an analogue
to~\cite[Thm.~3.1]{lw}.

\begin{defn}\label{defn:subspace}
The \defi{subspace variety} $\Sub_{m',n'}$ is the variety of tensors $x \in 
(U^{*}\otimes S^{2}V^{*})$ such that there exist vector spaces $\widetilde U^*
\subset U^*$ and $\widetilde V^* \subset V^*$ of dimensions $m'$ and $n'$
respectively with $x \in (\widetilde U^* \otimes S^{2} \widetilde V^*) \subset
(U^* \otimes S^2  V^*)$.
\end{defn}

\begin{remark}\label{rmk:weyman}
The variety $\Sub_{m',n'}$ has a nice desingularization, 
analogous to that used to prove the results in~\cite[\S3]{lw}.  
Consider the product of Grassmannians $\Gr(m',U^{*})\times \Gr(n',V^{*})$,
and let $E$ be the total space of
the vector bundle $\mathcal{R}_{U}\otimes S^{2}\mathcal{R}_{V}$, where
$\mathcal{R}_{U}$ and $\mathcal{R}_{V}$ are the tautological subbundles over
$\Gr(m',U^{*})$ and $\Gr(n',V^{*})$, respectively.  Then there is a natural map
$\pi\colon E\to \Sub_{m',n'}$, which desingularizes $\Sub_{m', n'}$.  Moreover,
one can verify that Weyman's geometric
technique can be applied in this situation~\cite[\S5]{weyman-book}.  In fact, a
straightforward adaptation of the argument in~\cite[Thm.~3.1]{lw} implies that
$\Sub_{m',n'}$ is normal with rational singularities.
\end{remark}

We next directly calculate the generators of the ideal of the subspace variety
when $m' = m$, which is the case we need.

\begin{prop}\label{prop:subspace}
The defining ideal of $\Sub_{m,n'}$ equals
$I_{\kappa_0\leq n'}$.
\end{prop}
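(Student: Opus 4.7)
The plan has two stages: establish the set-theoretic equality $\Sigma_{\kappa_0\leq n'}=\Sub_{m,n'}$, then upgrade it to an equality of ideals using the desingularization from Remark~\ref{rmk:weyman}.

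For the set-theoretic stage, write $x=\sum_{i=1}^m e_i\otimes A_i$ with each $A_i$ a symmetric $n\times n$ matrix. Up to the natural identifications, the transpose of the matrix of $\psi_{0,x}$ is the block matrix $M=(A_1\mid A_2\mid\cdots\mid A_m)$, so $\kappa_0(x)=\rank(M)$. If $x\in U^*\otimes S^2\widetilde V^*$ for some $\widetilde V^*\subseteq V^*$ of dimension $n'$, then each $A_i\in S^2\widetilde V^*$ has all columns in $\widetilde V^*$, giving $\rank(M)\le n'$. Conversely, if $\rank(M)\le n'$, let $\widetilde V^*$ be the column span of $M$. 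Then each $A_i$, viewed as an element of $V^*\otimes V^*$, lies in $V^*\otimes\widetilde V^*$, and by symmetry of $A_i\in S^2V^*$ also in $\widetilde V^*\otimes V^*$, so $A_i\in\widetilde V^*\otimes\widetilde V^*\cap S^2V^*=S^2\widetilde V^*$. This establishes the set-theoretic equality. Since $\Sub_{m,n'}$ is reduced (indeed normal, by Remark~\ref{rmk:weyman}), we conclude $I_{\kappa_0\le n'}\subseteq I(\Sub_{m,n'})$.

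For the reverse containment, I would apply Weyman's geometric technique to the desingularization $\pi\colon E\to\Sub_{m,n'}$ from Remark~\ref{rmk:weyman}, where $E$ is the total space of the bundle $\mathcal F=U^*\otimes S^2\mathcal R_V$ on $Y=\Gr(n',V^*)$. Since $\pi$ is birational and $\Sub_{m,n'}$ has rational singularities, the pushforward of the Koszul complex of the inclusion $\mathcal F\hookrightarrow (U^*\otimes S^2V^*)\otimes\Osh_Y$ yields a $\GL(U)\times\GL(V)$-equivariant free resolution of $\Osh_{\Sub_{m,n'}}$ over the coordinate ring of $U^*\otimes S^2V^*$. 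The minimal generators of $I(\Sub_{m,n'})$ in the lowest possible degree correspond to $H^0(Y,\bw{n'+1}\eta^*)$, where $\eta=(U^*\otimes S^2V^*)/\mathcal F$ is the quotient bundle.

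The main computation---and the hardest step---is showing that $H^0(Y,\bw{n'+1}\eta^*)$ matches Proposition~\ref{prop:kappa0:decomp}, namely $\bigoplus_{|\pi|=n'+1}S_\pi U\otimes S_{\pi'+1^{n'+1}}V$, and that no minimal generators arise in higher degrees. To carry this out, I would filter $S^2V^*/S^2\mathcal R_V$ with associated graded $\mathcal R_V\otimes\mathcal Q_V\oplus S^2\mathcal Q_V$ (where $\mathcal Q_V=V^*/\mathcal R_V$), apply the Cauchy formula to decompose $\bw{n'+1}\eta^*$, and use the Borel-Weil theorem to compute global sections of each summand on $Y$. Once one checks that these contributions assemble into exactly the modules of Proposition~\ref{prop:kappa0:decomp} and that the higher terms of the Weyman resolution do not introduce new minimal generators, Schur's lemma together with the multiplicity-free nature of the decomposition forces $I_{\kappa_0\le n'}=I(\Sub_{m,n'})$.
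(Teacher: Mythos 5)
Your set-theoretic argument agrees with the paper's: the key point is that symmetry of each $A_i$ upgrades ``columns of $M$ lie in $\widetilde V^*$'' to $A_i\in S^2\widetilde V^*$. (A small point of care: the column span of $M$ may have dimension strictly less than $n'$, so take $\widetilde V^*$ to be any $n'$-dimensional space containing it.)

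For the ideal-theoretic equality you take a genuinely different route from the paper. You propose to run the full Kempf--Weyman machine on the desingularization $\pi\colon E\to\Sub_{m,n'}$ and to identify the generators with $H^0\bigl(Y,\bw{n'+1}\eta^*\bigr)$, matching the result against Proposition~\ref{prop:kappa0:decomp}. This is a legitimate strategy (it is in the spirit of Landsberg--Weyman for the non-symmetric subspace varieties), but the two hard steps---the Borel--Weil--Bott computation of the cohomology of $\bw{k}\eta^*$ over the Grassmannian, and the verification that the linear strand of the Weyman complex contributes no minimal generators beyond degree $n'+1$---are only outlined, not carried out, and they constitute the real work. The paper avoids cohomology entirely: it proves $I_{\kappa_0\leq n-1}$ is radical directly, via a codimension count, Cohen--Macaulayness of the ideal of maximal minors of a generic $n\times mn$ matrix, and an explicit local check at one point; and it then descends to smaller $n'$ by reverse induction using Lemma~\ref{lem:subspace-projection}, whose proof is pure Schur's Lemma. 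The paper's argument needs only that the pullback of $I_{\kappa_0\leq n'-1}$ to $E$ is reduced fiberwise (which reduces to the maximal-minors case just checked), and never needs the module-by-module description of the ideal. Your route, if completed, would produce more---the full equivariant resolution---but as written the central computation is left as a plan rather than a proof, so you should either execute the Bott calculation and the ``no higher generators'' check, or switch to the more elementary reducedness-transfer argument.
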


The following lemma plays a crucial technical role in the proof of both 
Proposition~\ref{prop:subspace} and Theorem~\ref{thm:main}, as it provides
a criterion for determining the reducedness of some of the ideals that we
are studying.

\begin{lemma}\label{lem:subspace-projection}
Let $Z$ be a $\GL(U) \times \GL(V)$-invariant reduced subscheme of
the desingularization~$E$ from Remark~\ref{rmk:weyman}.  
Suppose that $I$ is an
invariant ideal in $S^\bullet(U \otimes S^2 V)$, which contains the ideal of
$\Sub_{m',n'}$, and whose pullback to $E$ defines~$Z$. Then $I$ is the ideal
of~$\pi(Z)$.  In particular, $I$ is a radical ideal.
\end{lemma}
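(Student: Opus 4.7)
Set $Y := \pi(Z)$, the scheme-theoretic image of $Z$, which is a closed reduced $\GL(U)\times\GL(V)$-invariant subscheme of $\Sub_{m',n'}$ (reducedness uses properness of $\pi|_Z$ in characteristic zero). Let $I_Y \subset S^\bullet(U\otimes S^2 V)$ denote its defining (radical) ideal. The plan is to prove $I = I_Y$; the radicality of $I$ will then follow at once. The inclusion $I \subseteq I_Y$ is immediate: if $f \in I$ then $\pi^* f \in I\cdot \mathcal{O}_E = I_Z$, so $\pi^* f$ vanishes on $Z$ and hence $f$ vanishes on $Y = \pi(Z)$.

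For the reverse inclusion, I will invoke the rational singularities of $\Sub_{m',n'}$ (Remark~\ref{rmk:weyman}), which yield $R\pi_* \mathcal{O}_E = \mathcal{O}_{\Sub_{m',n'}}$. Applying $\pi_*$ to the short exact sequence $0 \to I_Z \to \mathcal{O}_E \to \mathcal{O}_Z \to 0$ and using $R^1\pi_* \mathcal{O}_E = 0$ identifies $\pi_* I_Z$ with the ideal $I_Y$ of the scheme-theoretic image. Independently, applying $R\pi_*$ to the surjection $\pi^* I \twoheadrightarrow I_Z$ and using the derived projection formula $R\pi_*(L\pi^* I) \cong I$ (another consequence of $R\pi_* \mathcal{O}_E = \mathcal{O}_{\Sub_{m',n'}}$) shows that the natural inclusion $I \hookrightarrow \pi_* I_Z$ is in fact an equality, the higher Tor and $R^i\pi_*$ contributions vanishing because of the rationality hypothesis. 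Combining these two identifications yields $I = \pi_* I_Z = I_Y$.

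The main technical obstacle is the identification $\pi_* I_Z = I$: since $\pi$ is not flat and $I$ need not be locally free, one must carefully handle the difference between $\pi^* I$ and $L\pi^* I$ and the contributions supported on the exceptional locus of $\pi$. The $\GL$-invariance of $I$ is essential here, allowing Schur-module decompositions to control the comparison between $I$ and $\pi_* I_Z$, and the characteristic-zero hypothesis enters through the rational singularities (cf.\ Remark~\ref{rmk:characteristic}).
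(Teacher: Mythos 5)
Your easy inclusion $I\subseteq J$ (where $J$ denotes the defining ideal of $\pi(Z)$) and the observation that $\pi_*I_Z$ is the ideal sheaf of the scheme-theoretic image agree with the paper's first steps; the latter only needs $\pi_*\mathcal{O}_E=\mathcal{O}_{\Sub_{m',n'}}$, which is normality, not the full rational-singularities package. The genuine gap is the reverse inclusion $J\subseteq I$, which is the entire content of the lemma, and your cohomological argument for it does not go through. The derived projection formula $R\pi_*(L\pi^*\mathcal{F})\cong\mathcal{F}\otimes^{L}R\pi_*\mathcal{O}_E$ requires $\mathcal{F}$ to be perfect, and on the singular $\Sub_{m',n'}$ the ideal sheaf $I/I_{\Sub_{m',n'}}$ need not be; if you instead regard $I$ as perfect on the ambient smooth affine space, the formula yields $I\otimes^{L}\mathcal{O}_{\Sub_{m',n'}}$, not $I$. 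More fundamentally, the desired equality $\pi_*(I\cdot\mathcal{O}_E)=I/I_{\Sub_{m',n'}}$ simply fails for a general (non-equivariant) ideal with these properties: the pushforward of $I\cdot\mathcal{O}_E$ along a resolution is governed by integral-closure phenomena that rational singularities of the ambient space do not control, so the hypothesis you would need for the projection-formula route to close the argument is exactly the conclusion. You acknowledge at the end that the $\GL(U)\times\GL(V)$-invariance must enter through Schur-module decompositions, but you leave that assertion unproved, and it is precisely where the proof lives.

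The paper's proof is exactly the missing Schur-module argument. Fixing a fiber and writing $\phi\colon S^\bullet(U\otimes S^2 V)\to S^\bullet(\widetilde U\otimes S^2 \widetilde V)$ for the induced restriction, it shows that every irreducible $\GL(U)\times\GL(V)$-submodule $M\subseteq J$ satisfies $\phi(M)\subseteq\phi(I)$, so there is a submodule $N\subseteq I$ with $\phi(N)=\phi(M)$; any invariant subrepresentation of $M+N$ killed by $\phi$ vanishes on $\Sub_{m',n'}$ and hence lies in $I$ by the hypothesis $I\supseteq I_{\Sub_{m',n'}}$, which forces $M\subseteq I$. This fiberwise comparison of irreducible modules, using both the invariance of $I$ and the containment $I\supseteq I_{\Sub_{m',n'}}$, is the argument your proposal would need to supply.
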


\begin{proof}
Let $J\subseteq S^\bullet(U \otimes S^2 V)$ be the defining ideal of $\pi(Z)$.
Recall that $q\colon E\to \Gr(m',U^*\times \Gr(n',V^*)$ is the total space of a vector bundle, as a defined in Remark~\ref{rmk:weyman}.  Our set-up is the following commutative diagram:
\[
\xymatrix{
Z\ar@{^(->}[r]\ar[d] &E\ar[d]^\pi&\\
\pi(Z)\ar@{^(->}[r]& \Sub_{m',n'}\ar@{^(->}[r]&\left( U^*\otimes S^2V^*\right).
}
\]
The hypothesis that the pullback of $I$ defines $Z$ in $E$ guarantees $I\subseteq J$.
We thus need to show the reverse inclusion.

A point $P \in \Gr(m', U^*) \times \Gr(n', V^*)$
corresponds to vector subspaces $\widetilde U^* \subset U^*$ and $\widetilde V^*
\subset V^*$, and this induces 
a surjection of rings $\phi_P \colon S^\bullet(U
\otimes S^2 V) \rightarrow S^\bullet(\widetilde U \otimes S^2 \widetilde V)$.  The fiber $q^{-1}(P)\subseteq E$ is isomorphic
to the affine space $(\widetilde U^* \otimes S^2 \widetilde V^*)$, and we define
$\widetilde{Z}_P:=Z\cap q^{-1}(P)$.

We claim that a polynomial $f\in S^\bullet(U \otimes S^2 V)$ belongs to $J$ if
and only if $\phi_P(f)$ vanishes on $\pi(\widetilde{Z}_P)$ for every choice
of~$P$.  The ``only if'' direction of the claim is straightforward.  For the
``if'' direction, we first note that, since $Z$ is assumed to be reduced, the
condition $f\in J$ is equivalent to the condition that the pullback of~$f$
vanishes on every point $y\in \pi(Z)$.  This is in turn equivalent to the
condition $f$ vanishes on each point of $Z$, which is implied by the hypothesis
that $\phi_P(f)$ vanishes on $\pi(\widetilde{Z}_P)$ for each $P$.

In fact, since $\GL(U)\times \GL(V)$ acts transitively on $\Gr(m', U^*) \times \Gr(n', V^*)$, we conclude that $f$ vanishes on $\pi(Z)$
if and only if $\phi_P(g \cdot f)$ vanishes on $\pi(\widetilde{Z})$ for \emph{any}
fixed choice
of~$P$ and all $g \in \GL(U) \times \GL(V)$.  Therefore, $J$
is the sum of all irreducible Schur submodules $M$ of $S^\bullet (U \otimes S^2 V)$ such that
$\phi_P(M)$ vanishes on $\widetilde{Z}_P$.
For the rest of the proof we fix $\widetilde U^*$ and
$\widetilde V^*$ and denote the induced map~$\phi_P$ by~$\phi$ and
$\widetilde{Z}_P$ by~$\widetilde{Z}$.

To show that $J\subseteq I$, let $M$ be an irreducible Schur
submodule of the ideal $J$; we want to
show that $M$ is contained in our given ideal~$I$.  If $M$ is isomorphic to $S_\mu U \otimes
S_\nu V$, then the construction of Schur modules implies that $\phi(M)$ is
isomorphic as a $\GL(\widetilde U) \times \GL(\widetilde V)$-representation to $S_\mu
\widetilde U \otimes S_\nu \widetilde V$, which is either trivial or an irreducible
representation.  We know that $\phi(M)$ vanishes on $\widetilde{Z}$
and thus, since $I$ pulls back to the defining ideal of $Z$, it follows that
 $\phi(M)$ is contained in $\phi(I)$. There is thus an irreducible Schur submodule $N \subset I$ such that
$\phi(N) = \phi(M)$, and hence $N$ is isomorphic to $S_\mu U \otimes S_\nu V$. If
$N$ equals $M$, then we are done. Otherwise, $\phi$ sends the submodule
$N+M$, spanned by two copies of $S_\mu U \otimes S_\nu V$,
to the submodule $\phi(M)$, which is a single copy of $S_\mu \widetilde U \otimes
S_\nu \widetilde V$.
Thus, some subrepresentation $L$ of $N+M$ is sent to zero by $\phi$. Since $L$
is a representation in the kernel of $\phi$, $L$ belongs to the ideal of $\Sub_{m', n'}$, which
is contained in~$I$ by
assumption.
It follows that $I$ contains the span of $N$ and $L$,
and hence $I$ contains $M$.  We conclude that $I=J$ as desired.
\end{proof}

\begin{proof}[Proof of Proposition~\ref{prop:subspace}]
First, we prove the claim set-theoretically. 
The $(n' + 1) \times (n' + 1)$
minors of $\psi_{0,x}$ vanish if and only if the map has rank at most $n'$.
By linear algebra, this is equivalent to the existence of a change of basis in
which $\psi_{0,x}$ uses only the first $n'$ rows, which is the definition of
$\Sub_{m, n'}$.

Second, we show that $I_{\kappa_0 \leq n'}$ is radical in the case when $n' = n-
1$. Note that $\Sub_{m,n-1}$ has dimension $m\binom{n}{2}+n$,
and thus $\Sub_{m,n-1}$ and~$\Sigma_{\kappa_0 \leq n-1}$ have codimension
$mn - n +1$. This is the same as the codimension of the maximal
minors of a generic $n \times mn$ matrix, so $I_{\kappa_0 \leq n-1}$ is
Cohen-Macaulay by~\cite[Thm.\ 18.18]{eisenbud}, and it suffices to show that
the affine cone over $\Sigma_{\kappa_0 \leq n-1}$ is reduced at some point. Consider a neighborhood
of the point $u_1\otimes v_1^2 + \cdots + u_1\otimes v_{n-1}^2$. In coordinates around this point, $I_{\kappa_0 \leq n-1}$ consists
of the maximal minors of the $n \times mn$ matrix:
\begin{equation*}\begin{bmatrix}
1 + x_{1,1,1} &  \cdots & x_{1,1,n-1} & x_{1,1,n} &
x_{2,1,1} & \cdots & x_{2,1,n} & \cdots & x_{m,1,n} \\
\vdots & \ddots & \vdots & \vdots & \vdots & & \vdots & \cdots & \vdots \\
x_{1,1,n-1} & \cdots & 1 + x_{1,n-1,n-1} & x_{1,n-1,n} & 
\vdots & & \vdots & \cdots & \vdots \\
x_{1,1,n} & \cdots & x_{1,n-1,n} & x_{1,n,n} &
x_{2,1,n} & \cdots & x_{2,n,n} & \cdots & x_{m,n,n}
\end{bmatrix}.\end{equation*}
The $mn - n +1$ minors which use the first $n-1$ columns form part of a regular
sequence, and thus the affine cone over $\Sigma_{\kappa_0 \leq n-1}$ is reduced in a neighborhood of
this point. Since $I_{\kappa_0 \leq n-1}$ is a Cohen-Macaulay ideal, it
follows that $\Sigma_{\kappa_0\leq n-1}$ is everywhere reduced.

Third, we show that $I_{\kappa_0 \leq n'}$ defines $\Sub_{m, n'}$ for arbitrary
$n'$.  By reverse induction on $n'$, we assume that $I_{\kappa_0 \leq n'}$
equals the ideal of $\Sub_{m, n'}$, and we seek to show equality for $n'-1$.  We
will apply Lemma~\ref{lem:subspace-projection}, where $E$ is the vector bundle
over $\Gr(m, U^*) \times \Gr(n', V^*) = \Gr(n', V^*)$ desingularizing
$\Sub_{m,n'}$ as in Remark~\ref{rmk:weyman}. Note
that, by cofactor expansion, $I_{\kappa_0\leq n'-1}$ contains $I_{\kappa_0 \leq
n'}$, which is the ideal of $\Sub_{m,n'}$ by the inductive hypothesis.  We
describe $Z$, which is defined by the pullback of $I_{\kappa_0 \leq n' - 1}$,
on a local trivialization $(U^* \otimes S^2 \widetilde V^*) \times Y$ of the vector bundle~$E$, where
$\widetilde V$ is $n'$-dimensional and $Y$ is an open subset of $\Gr(n', V^*)$.
The pullbacks of the $(n' - 1) \times (n' - 1)$ minors of $\psi_{0,x}$ do not involve the
base~$Y$, and are the $\kappa_0
\leq n' -1 $ equations applied to $U^* \otimes S^2 \widetilde V^*$.  These are maximal minors
of the matrix $\psi_{0,x}$ for $U^* \otimes S^2 \widetilde V^*$, and hence they
define a reduced subscheme of $U^* \otimes S^2 \widetilde V^*$  by 
the previous paragraph. In the local
trivialization, their scheme is the product of this reduced scheme with $Y$, so
the preimage of $\Sigma_{\kappa_0 \leq n'}$ in~$E$ is reduced.  We may thus apply
Lemma~\ref{lem:subspace-projection} and conclude that $I_{\kappa_0 \leq n'-1}$ is reduced.
\end{proof}

\begin{remark}
When $m' < m$, the ideal of $\Sub_{m',n'}$ is similarly generated by the sum of
$I_{\kappa_0 \leq n'}$ and the irreducible modules in $\bw{m'+1} U \otimes
\bw{m'+1}(S^2V)$. A decomposition of the latter space,
in somewhat different notation, can be found at~\cite[p.\ 47]{Macdonald}.
\end{remark}


\section{Secant varieties of \texorpdfstring{$\PP^2 \times \PP^{n-1}$}{P2 x
Pn-1} embedded by $\Osh(1,2)$}
\label{sec:m-3}
In this section, we prove the main result of our paper, which is to show that the equations
given in Definition~\ref{def:ideal-non-sym} generate the defining ideal of the $r$th secant variety of $\PP^2 \times \PP^{n-1}$ embedded by $\Osh(1,2)$ when $r\leq 5$.

We first consider a simpler case: the secant varieties of $\PP^1\times \PP^{n-1}$
embedded by $\Osh(1,2)$.  All such secant varieties are defined by
$\kappa$-equations, which, in this case,
are simply the minors of flattenings.
The analogous statement for non-symmetric matrices appears as
Theorem~1.1 in~\cite{lw}.  However, we know of no proof in the literature
for the case of partially symmetric tensors, so we provide one below.

\begin{defn}
For a variety $X\subseteq \PP^N$ we denote the affine cone of~$X$ in $\mathbb A^{N+1}$ by~$\widehat{X}$.
\end{defn}

\begin{prop}\label{prop:m-equals-1}
Suppose $m= 2$, and let $Y\subseteq \mathbb P(U^*\otimes S^2 V^*)$ be the image
of $\PP(U^*) \times \PP(V^*)$ under the embedding by $\Osh(1,2)$.  For any $r>1$, and any $n$, the secant variety $\sigma_r(Y)$ is defined ideal-theoretically by the ideal $I_{\kappa_0\leq r}$.  
\end{prop}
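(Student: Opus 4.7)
The plan is to identify $\sigma_r(Y)$ with the subspace variety $\Sub_{2,r}$ and then invoke Proposition~\ref{prop:subspace}.

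By Proposition~\ref{prop:subspace} applied with $m' = m = 2$ and $n' = r$, the ideal $I_{\kappa_0 \leq r}$ is the defining ideal of $\Sub_{2,r}$. Since $\Gr(2, U^*)$ is a single point, Remark~\ref{rmk:weyman} identifies $\Sub_{2,r}$ with the image of the vector bundle $\mathcal{R}_U \otimes S^2 \mathcal{R}_V$ over the irreducible Grassmannian $\Gr(r, V^*)$, so $\Sub_{2,r}$ is irreducible and $I_{\kappa_0 \leq r}$ is prime. Thus I only need to check that $\sigma_r(Y)$ and $\Sub_{2,r}$ coincide as varieties.

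The inclusion $\sigma_r(Y) \subseteq \Sub_{2,r}$ is immediate from the definitions: any tensor of the form $\sum_{i=1}^r u_i \otimes v_i \otimes v_i$ lies in $U^* \otimes S^2 \widetilde{V}^*$ for $\widetilde{V}^* = \langle v_1, \dots, v_r \rangle$, and $\Sub_{2,r}$ is closed. For the reverse inclusion, since $\Sub_{2,r}$ is irreducible, it suffices to verify that a dense open subset lies in $\sigma_r(Y)$. A general point is represented by $e_1 \otimes A + e_2 \otimes B$, where $A, B$ are generic symmetric $r \times r$ matrices; then $A$ is invertible and $A^{-1}B$ has $r$ distinct eigenvalues, so the classical Kronecker--Weierstrass theory of pencils of symmetric matrices produces an invertible $P$ which simultaneously diagonalizes $A$ and $B$ by congruence. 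This yields an expression $x = \sum_{i=1}^r (\alpha_i e_1 + \beta_i e_2) \otimes v_i \otimes v_i$ of partially symmetric rank at most $r$, so the general point of $\Sub_{2,r}$ lies in $\sigma_r(Y)$.

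Combining the two inclusions gives $\sigma_r(Y) = \Sub_{2,r}$, and these share the prime defining ideal $I_{\kappa_0 \leq r}$. The only nontrivial ingredient is the simultaneous congruence-diagonalization of a generic pencil of symmetric matrices, which is classical, so I expect the argument to be short once Proposition~\ref{prop:subspace} is in hand.
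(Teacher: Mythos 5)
Your argument is correct, and it takes a genuinely different route from the paper for the set-theoretic equality $\sigma_r(Y) = \Sub_{2,r}$. Both you and the paper reduce to Proposition~\ref{prop:subspace}, which handles the ideal-theoretic content by identifying $I_{\kappa_0 \le r}$ with the prime ideal of $\Sub_{2,r}$. From there, the paper argues by a dimension count: it invokes~\cite[Cor.~1.4(i)]{abo} to get $\dim \widehat{\sigma_r}(Y) = rn + r$, computes $\dim \Sub_{2,r} = r(n-r) + 2\binom{r+1}{2} = rn + r$ directly, and concludes by the integrality of $\Sub_{2,r}$. You instead prove the reverse inclusion $\Sub_{2,r} \subseteq \sigma_r(Y)$ by constructing an explicit decomposition of a generic point, using simultaneous congruence-diagonalization of a generic pencil $(A,B)$ of symmetric $r\times r$ matrices. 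That step is sound: since $A$ is invertible and $A^{-1}B$ has distinct eigenvalues, the eigenvectors $v_1,\dots,v_r$ satisfy $v_i^T A v_j = 0$ for $i\neq j$, and invertibility of $P^T A P$ forces $v_i^T A v_i \neq 0$, so $P=[v_1,\dots,v_r]$ diagonalizes both $A$ and $B$ by congruence and yields a partially symmetric rank-$r$ decomposition. The tradeoff: your argument is self-contained and constructive, avoiding the appeal to the nondefectivity result of Abo--Ottaviani--Peterson, at the cost of a small amount of explicit linear algebra; the paper's argument is shorter but depends on an external dimension theorem. One small caveat in your writeup: your phrase ``generic symmetric $r\times r$ matrices'' presumes $n\ge r$; for $n<r$ the subspace $\widetilde V^*$ is all of $V^*$, the relevant matrices are $n\times n$, $I_{\kappa_0\le r}=0$, and both sides are all of $\PP^N$, so the statement is vacuously true. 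Worth a sentence, but not a gap.
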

\begin{proof}
We have
$
\widehat{\sigma_r}(Y) \subseteq \widehat{\Sigma}_{\kappa_0\leq r}=\Sub_{2,r},
$
where the inclusion follows from Proposition~\ref{prop:vanish:semi} and the equality follows from Proposition~\ref{prop:subspace}.  Since $\Sub_{2,r}$ is integral, it suffices to prove that $\widehat{\sigma_r}(Y)$ and $\Sub_{2,r}$ have the same dimension.  By~\cite[Cor.\ 1.4(i)]{abo}, the former has the expected dimension $rn + r $.  From the definition of $\Sub_{2,r}$, we can compute its
dimension to be $r(n-r) + 2\binom{r+1}{2} = rn + r $.
\end{proof}

For the remainder of this section, we restrict to the case when $\dim U^*
= 3$, which is the next partially symmetric case.
We let $\dim V^*=n$, and we consider
partially symmetric tensors $x \in U^* \otimes S^2 V^*$. We fix $\mathbb P^{N}:=\mathbb P(U^*\otimes S^{2}(V^*))$ and we
let $X \subset \PP^{N}$ denote the embedding of $\PP(U^*) \times \PP(V^*)$ by
$\Osh(1,2)$.
Let $S:=S^{\bullet}(U\otimes S^{2}(V))$ be the homogeneous coordinate ring
of~$\mathbb P^N$, which contains the ideals $I_{\kappa_j \leq c_j}$ and~$I_{\kappa
\leq c}$ as in Definition~\ref{def:ideal-sym}.

\begin{thm}\label{thm:main}
For $r\leq 5$, the defining ideal of the variety $\sigma_r(X)$ is 
$I_{\kappa\leq (r,2r,r)}$.
\end{thm}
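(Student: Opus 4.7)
The containment $\sigma_r(X) \subseteq \Sigma_{\kappa \leq (r,2r,r)}$ comes for free from Proposition~\ref{prop:vanish:semi}, so the plan is to establish the reverse set-theoretic containment and then show that $I_{\kappa\leq (r,2r,r)}$ is radical, ultimately via Lemma~\ref{lem:subspace-projection}.

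First, I would reduce to the case $n=r$. Any $x \in \Sigma_{\kappa\leq (r,2r,r)}$ satisfies $\kappa_0(x)\leq r$, so by Proposition~\ref{prop:subspace} it lies in $\Sub_{3,r}$; equivalently, after a change of basis we may assume $x \in U^*\otimes S^2\widetilde V^*$ for some $r$-dimensional $\widetilde V^* \subseteq V^*$. Inside this smaller ambient space the $\kappa_0$ condition is automatic, so the task reduces to showing: if $\dim V = r$ and the Pfaffian condition $\kappa_1(x)\leq 2r$ holds, then $x \in \sigma_r(X)$.

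Second, I would handle the $n=r$ case via a Strassen--Ottaviani style argument. Writing $x = e_1\otimes A_1 + e_2\otimes A_2 + e_3\otimes A_3$ with symmetric $r\times r$ matrices $A_i$, the $3r\times 3r$ skew-symmetric matrix $\psi_{1,x}$ has rank at most $2r$, so $\dim\ker\psi_{1,x} \geq r$. The block structure of $\psi_{1,x}$ translates kernel elements into commutation-type constraints on the triple $(A_1,A_2,A_3)$, and the crucial input of \cite[Thm.~3.1]{bpv} (together with characteristic zero, which is where the bound $r\leq 5$ enters) implies that generic such triples are simultaneously diagonalizable in a suitable sense. Such a simultaneous diagonalization yields an expression $x = \sum_{i=1}^r u_i \otimes v_i^{\otimes 2}$, placing $x$ in $\sigma_r(X)$. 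One must then verify that the Pfaffian locus is irreducible of the correct dimension and contains $\sigma_r(X)$ as a dense open subset, giving set-theoretic equality in this case.

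Third, for radicality and arbitrary $n$, I would apply Lemma~\ref{lem:subspace-projection} to the desingularization $\pi \colon E \to \Sub_{3,r}$ of Remark~\ref{rmk:weyman}, taking $I = I_{\kappa\leq (r,2r,r)}$. Proposition~\ref{prop:subspace} ensures that $I$ contains the defining ideal of $\Sub_{3,r}$, and on a local trivialization of $E$ the pullback of $I$ is precisely the Pfaffian ideal for the $n=r$ case, which by the previous step defines a reduced subscheme of $U^* \otimes S^2 \widetilde V^*$. Lemma~\ref{lem:subspace-projection} then identifies $I$ as the radical ideal of $\pi(Z) = \sigma_r(X)$, completing the proof. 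The main obstacle is the $n=r$ stage: extracting an honest rank-$r$ decomposition from the Pfaffian rank bound, where both the irreducibility of the commuting-symmetric-matrices variety and the simultaneous diagonalization step genuinely require $r\leq 5$ and characteristic zero.
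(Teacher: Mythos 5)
Your overall architecture---reduce to $n=r$ via the subspace variety, relate the Pfaffian locus to commuting symmetric matrices, then promote from the $n=r$ case to general $n$ via Lemma~\ref{lem:subspace-projection}---is the right skeleton and matches the paper's strategy. But there are two genuine gaps.

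The first and more serious gap is that you never address the degenerate locus $P$, defined as the set of $x = e_1\otimes A_1 + e_2\otimes A_2 + e_3\otimes A_3$ for which $\det(t_1 A_1 + t_2 A_2 + t_3 A_3)$ vanishes identically. Your commuting-matrices argument only applies away from $P$, where some linear combination of the $A_i$ is invertible and you can normalize $A_1 = \Id$; on that open set, the row-reduction of $\psi_{1,x}$ shows that the Pfaffian locus restricted to $\{A_1 = \Id\}$ is exactly the variety of commuting symmetric pairs $(A_2, A_3)$, which is irreducible of codimension $\binom{r}{2}$ by \cite[Thm.~3.1]{bpv}. That theorem holds for all $r$ in characteristic zero; it is not where $r\leq 5$ enters. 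The restriction $r\leq 5$ comes entirely from Lemma~\ref{lem:dim-P}, which asserts that $\codim P > \binom{r}{2}$ when $n=r\leq 5$. Without that bound you cannot conclude that $\widehat{\Sigma}_{\kappa_1\leq 2r}$ has codimension exactly $\binom{r}{2}$ or that it is irreducible: a priori there could be extra components or excess dimension concentrated inside $P$, where the normalization $A_1 = \Id$ is unavailable. Your proposal misattributes the source of the $r\leq 5$ hypothesis to [bpv] and to simultaneous diagonalization, when in fact those work unconditionally; the real constraint is the dimension estimate for $P$. Once you have that estimate, the paper's argument is slightly cleaner than a pointwise diagonalization claim: the specialization of the generic Pfaffian ideal is shown to be a regular-sequence specialization (using that it has the expected codimension), hence $\Sigma$ is arithmetically Gorenstein and therefore either reduced or everywhere nonreduced, and reducedness follows from the reducedness of the commuting-symmetric-matrices slice.

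The second gap is that your reduction "reduce to the case $n=r$" only works for $n\geq r$: the desingularization of $\Sub_{3,r}$ lives over $\Gr(r, V^*)$, which is empty when $n<r$. The paper handles $n<r$ separately via a linear projection $\PP^{N'}\dashrightarrow \PP^{N}$ from the $n=r$ case, using that linear projections commute with secant varieties and that for $n<r$ the $\kappa_0$-conditions are vacuous. Your proposal should acknowledge this case and include the projection argument, or at least note that it requires a different reduction than Lemma~\ref{lem:subspace-projection}.
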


Our method of proof is as follows.  When $n$ equals $r$, we relate the ideal
$I_{\kappa_1 \leq 2r}$ to the ideal of commuting symmetric matrices.  This is a variant of an idea which has appeared in several instances previously~\cite{strassen, Ottaviani07,shuchao}.
This
relation
only holds away from a certain closed subvariety of $\PP^N$, and in order to extend
to all of $\PP^N$, we need a bound on the dimension of this variety.  Such a bound is given in
in~\cite[\S5]{ev}, and only holds for $r \leq 5$. Finally, we reduce the general
case to the case of $n=r$, using Lemma~\ref{lem:subspace-projection}.

Before the proof, we examine the secant varieties of $\PP^2\times \PP^3$ in more
detail.
\begin{example}\label{ex:23}
Let $X\subseteq \PP^{29}$ be the image of $\PP^2\times \PP^3$ embedded by $\mathcal O(1,2)$.  The defining ideal of $\sigma_5(X)$ was previously known. The secant variety $\sigma_5(X)$ is deficient, and is in fact a hypersurface in $\PP^{29}$.  This hypersurface is defined by the Pfaffian of $\psi_{1,x}$~\cite[Thm.\ 4.1]{Ottaviani07}.

In the non-symmetric case, ~\cite[Thm.\ 1.1]{lw} illustrates that the defining ideal for the second secant variety is generated by the $3\times 3$ minors of the various flattenings.  This suggests that a similar result holds in the partially symmetric case, although we know of no explicit reference for such a result.  Nevertheless, in the situation of this example, a direct computation with~\cite{M2} confirms that the defining ideal of $\sigma_2(X)$ is indeed generated by the $3\times 3$ minors of the flattening $\psi_{0,x}$ and by the $3\times 3$ minors of the other flattening of $x$, i.e.\ by considering $x$ in $\Hom(U, S^2 V^*)$. Theorem~\ref{thm:main} provides an alternate description, illustrating that the $3\times 3$ minors of $\psi_{0,x}$ and the $6\times 6$ principal Pfaffians of $\psi_{1,x}$ also generate the ideal of $\sigma_2(X)$.

As far we are aware, the defining ideals for $\sigma_3(X)$ and $\sigma_4(X)$ were not previously known.  In the case of $\sigma_4(X)$, the defining ideal is given by $I_{\kappa\leq (4,8,4)}$.  Since the ideals $I_{\kappa_0\leq 4}$ and $I_{\kappa_2\leq 4}$ are trivial, this equals the ideal $I_{\kappa_1\leq 8}$.  Thus, $\sigma_4(X)$ is defined by the $10\times 10$ principal Pfaffians of $\psi_{1,x}$.

The case of $\sigma_3(X)$ is perhaps the most interesting, since this case requires minors from both $\psi_{0,x}$ and $\psi_{1,x}$ (and, unlike the case of $\sigma_2(X)$, the Pfaffians from $\psi_{1,x}$ do not arise from an alternative flattening).  Here $\sigma_3(X)$ is defined by the maximal minors of $\psi_{0,x}$ as well as the $8\times 8$ principal Pfaffians of $\psi_{1,x}$.  By Example~\ref{ex:363}, we see that neither $I_{\kappa_0\leq 3}$ nor $I_{\kappa_1\leq 6}$ is sufficient to generate the ideal of $\sigma_3(X)$.

In fact, neither $I_{\kappa_0\leq 3}$ nor $I_{\kappa_1\leq 6}$ is sufficient to define $\sigma_3(X)$ even set-theoretically.  For $I_{\kappa_0\leq 3}$, this follows from the fact that a generic element $y\in \Sigma_{\kappa_0\leq 3}$ has $\kappa_1(y)=8$.  On the other hand, one may check that if
\[
x:=\sum_{i=1}^3 u_i\otimes(v_1\otimes v_{i+1}+v_{i+1}\otimes v_1)
\in U^*\otimes S^2 V^*,
\]
then $\kappa(x)=(4,6,4)$, and hence $[x]$ belongs to $\Sigma_{\kappa_1\leq 6}$ but not to $\sigma_3(X)$.
\end{example}

\begin{remark}\label{rmk:Comon}
Let $x\in U^*\otimes S^2 V^*$ and let $r\leq 5$.  Theorem~\ref{thm:main} implies that the border rank of $x$, considered as an element of $U^*\otimes V^*\otimes V^*$, equals the partially symmetric border rank of $x$.  This is because the ideal $I_{\kappa\leq (r,2r,r)}$ is (up to radical) the restriction to $\PP(U^*\otimes S^2 V^*)$ of an ideal on $\PP(U^*\otimes V^*\otimes V^*)$ which vanishes on the $r$th secant variety of $\PP(U^*)\times \PP(V^*)\times \PP(V^*)$ (see Proposition~\ref{prop:vanish} and Definition~\ref{def:ideal-sym} above).  This can thus be viewed as evidence for a partially symmetric analogue of Comon's Conjecture~\cite[\S 5]{cglm}.
\end{remark}

\begin{defn}\label{defn:purely}
If we write $x = e_1 \otimes A_1 + e_2 \otimes A_2 + e_3 \otimes A_3$ for $e_1,
e_2, e_3$ a basis of $U$ and the $A_i$ symmetric matrices, then 
$\det(t_1 A_1 + t_2 A_2 + t_3 A_3)$ is a
polynomial in $t_1$, $t_2$, and~$t_3$. We define $P \subset \mathbb A^{N+1}$ to be the
subset of those $x$ such that this polynomial vanishes identically.
\end{defn}

\begin{remark}\label{rmk:GLU}
Note that $\mathbb A^{N+1}-P$ is exactly the $\GL(U^*) \times \GL(V^*)$-orbit of the
set $\{e_1 \otimes \Id + e_2 \otimes B + e_3 \otimes C \mid B, C \in S^2 V^*\}$.
\end{remark}

\begin{lemma}\label{lem:setP}
Let $n=r$.  Then $\widehat{\Sigma}_{\kappa\leq (r,2r,r)}-P$ is an irreducible locus of
codimension at least $\binom{r}{2}$ on $\mathbb A^{N+1}-P$.
\end{lemma}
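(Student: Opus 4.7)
The plan is to use Remark~\ref{rmk:GLU} to reduce the entire question to the slice
\[
S := \{e_1\otimes\Id + e_2\otimes B + e_3\otimes C : B,C\in S^2V^*\},
\]
exploiting the fact that both $P$ and $\widehat{\Sigma}_{\kappa\leq(r,2r,r)}$ are invariant under $G := \GL(U^*)\times\GL(V^*)$ and that $\mathbb A^{N+1}-P = G\cdot S$. Consequently,
\[
\widehat{\Sigma}_{\kappa\leq(r,2r,r)} - P \;=\; G\cdot\bigl(\widehat{\Sigma}_{\kappa\leq(r,2r,r)}\cap S\bigr),
\]
so both the irreducibility and codimension statements will follow from the corresponding statements for $\widehat{\Sigma}_{\kappa\leq(r,2r,r)}\cap S$ inside $S$, together with a fiber-dimension count for the action map.

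I would next identify $\widehat{\Sigma}_{\kappa\leq(r,2r,r)}\cap S$ explicitly. When $n=r$ the $\kappa_0\leq r$ and $\kappa_2\leq r$ conditions of Definition~\ref{def:ideal-sym} are automatic since the relevant matrices have only $r$ rows, so the only nontrivial condition is $\kappa_1\leq 2r$, namely that the $3r\times 3r$ skew-symmetric matrix $\psi_{1,s}$ has rank at most $2r$. For $s=(\Id,B,C)\in S$, solving $\psi_{1,s}(u,v,w)^{t} = 0$ directly yields $v=Bu$, $w=Cu$, and $[B,C]u=0$, so $\ker\psi_{1,s}\cong\ker[B,C]$ via $u\mapsto(u,Bu,Cu)$. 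Since $[B,C]$ is an $r\times r$ matrix, the condition $\dim\ker\psi_{1,s}\geq r$ collapses to $[B,C]=0$, so $\widehat{\Sigma}_{\kappa\leq(r,2r,r)}\cap S$ is identified with the commuting variety
\[
\mathcal{C} := \{(B,C)\in(S^2V^*)^2 : BC=CB\}.
\]

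The main obstacle, and the only input required from outside the paper, is the next step: for $r\leq 5$ one must know that $\mathcal{C}$ is irreducible of codimension exactly $\binom{r}{2}$ in $(S^2V^*)^2$. I would cite \cite[\S 5]{ev} for this fact, supplemented by \cite[Thm.~3.1]{bpv} on commuting symmetric matrices in characteristic zero. This is precisely where the hypothesis $r\leq 5$ enters: for larger $r$ the commuting variety of symmetric matrices can acquire additional components of smaller codimension.

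Finally, I would transfer back via the action map $\mu: G\times S \to \mathbb A^{N+1}-P$, which is dominant by Remark~\ref{rmk:GLU}. By $G$-equivariance, $\mu^{-1}(\widehat{\Sigma}_{\kappa\leq(r,2r,r)}-P) = G\times\mathcal{C}$, which is irreducible as a product of irreducible varieties, so its image is irreducible as well. A standard dimension count (using $\dim S = 2\binom{r+1}{2}$ and $\dim(\mathbb A^{N+1}-P)=3\binom{r+1}{2}$) shows that the general fiber of $\mu$ has dimension $\dim G - \binom{r+1}{2}$, and the same generic fiber dimension holds for $\mu|_{G\times\mathcal{C}}$; hence
\[
\codim_{\mathbb A^{N+1}-P}\bigl(\widehat{\Sigma}_{\kappa\leq(r,2r,r)}-P\bigr) \;=\; \codim_S\mathcal{C} \;=\; \binom{r}{2},
\]
as desired.
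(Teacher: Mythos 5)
Your proposal follows essentially the same skeleton as the paper's proof: reduce to the affine slice $\mathcal W = \{e_1\otimes\Id + e_2\otimes B + e_3\otimes C\}$ using Remark~\ref{rmk:GLU}, identify $\widehat{\Sigma}_{\kappa\leq(r,2r,r)}\cap\mathcal W$ with the variety of commuting symmetric matrices, invoke the known irreducibility and codimension of that variety, and transfer back via the group action. Two small variations: you identify the intersection with the commuting variety by solving $\psi_{1,s}(u,v,w)^T=0$ directly, whereas the paper performs row and column operations on $\psi_{1,s}$ to obtain the block matrix $\mathrm{diag}\bigl(\bigl(\begin{smallmatrix}0&\Id\\-\Id&0\end{smallmatrix}\bigr), BC-CB\bigr)$; the paper's route gives scheme-theoretic (not just set-theoretic) identification with the commuting variety, which it needs again in Lemma~\ref{lem:n-equals-r}. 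And you finish with a fiber-dimension count for the action map, whereas the paper uses the cruder observation that $\codim_{\mathcal W}(\widehat{\Sigma}\cap\mathcal W) \leq \codim(\widehat{\Sigma}-P)$, which suffices for the stated lower bound $\binom{r}{2}$.

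There is, however, one genuine error in your write-up. You assert that irreducibility of the commuting variety of symmetric matrices, of codimension $\binom{r}{2}$, is only known for $r\leq 5$, and that this is ``precisely where the hypothesis $r\leq 5$ enters.'' This is wrong on both counts. The result of \cite[Thm.~3.1]{bpv} — that the variety $\{(B,C): [B,C]=0\}$ of commuting symmetric $r\times r$ matrices is integral of codimension $\binom{r}{2}$ — holds for all $r$ in characteristic zero, with no size restriction, and the lemma itself has no hypothesis $r\leq 5$. The restriction $r\leq 5$ in the paper enters only through Lemma~\ref{lem:dim-P}, which bounds the codimension of the degenerate locus $P$ (and which is quoted from \cite[\S5]{ev}); that bound is what allows Lemma~\ref{lem:n-equals-r} to deduce that $\widehat{\Sigma}_{\kappa_1\leq 2r}$ itself — not just its restriction away from $P$ — is irreducible of codimension $\binom{r}{2}$. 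As written, your proof would leave Lemma~\ref{lem:setP} unproved for $r>5$, even though it is true and is needed in Corollary~\ref{cor:saturated}, which has no bound on $r$. Also, you cite \cite[\S5]{ev} for the commuting-variety fact, but that reference is about $P$; the correct citation for the commuting variety is \cite[Thm.~3.1]{bpv} alone.
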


In fact, the codimension is exactly $\binom{r}{2}$, as will be shown in the proof of
Lemma~\ref{lem:n-equals-r}.

\begin{proof}
Since $n=r$, and $\kappa_0 = \kappa_2$ are always at most $n$, we have that
$\Sigma_{\kappa_1\leq r}=\Sigma_{\kappa\leq (r,2r,r)}$.
For convenience, we denote this scheme $\Sigma$, and we seek to show that
$\widehat{\Sigma} - {P}$ is irreducible and of codimension~$\binom{r}{2}$.  

We let $\mathcal W\subseteq \mathbb A^{N+1}$ be the set $\{e_1 \otimes \Id + e_2 \otimes B + e_3 \otimes C
\mid B, C \in S^2 V^*\}$ as in Remark~\ref{rmk:GLU}, and we identify points
in~$\mathcal W$ with pairs of symmetric matrices $(B, C)$.  Let $Z\subseteq
{\mathcal W}$ be the subscheme defined by the equations $[B,C]=0$.
By~\cite[Thm.\ 3.1]{bpv}, $Z$, known as the variety of commuting symmetric matrices,  is an integral subscheme of codimension $\binom{r}{2}$ in $\mathcal W$.

We claim that $\widehat{\Sigma} -{P}$ is irreducible. To see this, we note the following
equivalence of matrices under elementary row and column operations:
\begin{equation*}
\begin{bmatrix}
0 & \Id & -B \\
-\Id & 0 & C \\
B & -C & 0
\end{bmatrix} \sim
\begin{bmatrix}
0 & \Id & 0 \\
-\Id & 0 & 0 \\
0 & 0 & BC - CB
\end{bmatrix}.
\end{equation*}
Therefore, the scheme-theoretic intersection of $\widehat{\Sigma}$ with $\mathcal W$ is
exactly $Z$, the variety of commuting symmetric matrices.
By Remark~\ref{rmk:GLU} and the fact that $\kappa_1$ is
 $\GL(U^*) \times \GL(V^*)$-invariant, we see that $\widehat{\Sigma} - P$ is
exactly the $\GL(U^*)\times \GL(V^*)$ orbit of the irreducible variety~$Z$, and
therefore irreducible.

Finally, since $Z =
\mathcal W \cap \widehat{\Sigma}$, the codimension of $\widehat{\Sigma} -{P}$ in $\mathbb A^{N+1}$ is at least
the codimension of $Z$ in~$\mathcal W$, which is $\binom{r}{2}$.
\end{proof}

The following result is contained in~\cite[Proof of Cor.\ 5.6]{ev}.

\begin{lemma}\label{lem:dim-P}
If $n = r  \leq 5$, then the codimension of~$P$ in~$\mathbb A^{N+1}$ is strictly greater
than $\binom{n}{2}$.
\end{lemma}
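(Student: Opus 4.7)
The plan is to bound $\dim P$ by stratifying the triples $(A_1, A_2, A_3) \in P$ according to the generic rank $r'$ of a matrix in the associated net $W = \langle A_1, A_2, A_3 \rangle \subset S^2V^*$. Since every element of $W$ is singular, $r' \le n-1$, so there are only finitely many strata to consider. The target bound is
\[
\dim P < 3\binom{n+1}{2} - \binom{n}{2} = n(n+2).
\]

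For each stratum $P_{r'}$, the natural parametrization would exploit that a generic such triple admits a common $(n-r')$-dimensional kernel subspace $K \subset V$. Whenever this holds, the triple $(A_1, A_2, A_3)$ is determined by the data of $K \in \operatorname{Gr}(n-r', V)$ together with a triple of symmetric forms on the $r'$-dimensional quotient $V/K$, so such a stratum has dimension at most
\[
r'(n-r') + 3\binom{r'+1}{2}.
\]
A quick check shows that this expression is monotonically increasing in $r'$, so its maximum over $r' \le n-1$ is attained at $r' = n-1$ and equals $(n-1)(3n+2)/2$. I would then verify directly that $(n-1)(3n+2)/2 < n(n+2)$ if and only if $n^2 - 5n - 2 < 0$, that is, if and only if $n \le 5$. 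This matches the hypothesis of the lemma exactly and explains why the approach cannot extend past $n = 5$.

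The main obstacle will be justifying the common-kernel parametrization itself: a net of singular symmetric matrices need not possess a common kernel of the expected size, as exemplified by the Pfaffian-type representations of smooth plane curves that arise in the classical theory. I would handle these exotic nets by a further stratification, bounding them via the Fano variety of $\mathbb{P}^2$'s contained in the rank-$\le r'$ symmetric determinantal variety, whose dimensions can be extracted from the standard desingularization of such loci. For $n \le 5$, the exotic strata turn out to have strictly smaller dimension than the principal common-kernel strata, so the bound $\dim P < n(n+2)$ survives. This is essentially the analysis carried out in \cite[\S5]{ev}, and I would follow that reference for the detailed case-by-case verification.
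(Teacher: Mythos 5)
The paper does not give its own proof of this lemma; the entire ``proof'' is the one-line citation to~\cite[Proof of Cor.~5.6]{ev}. Your sketch fleshes this out into a stratification-by-generic-rank plan, and the arithmetic on the common-kernel strata is correct: $r'(n-r') + 3\binom{r'+1}{2}$ is increasing in~$r'$, is maximized at $r' = n-1$ where it equals $(n-1)(3n+2)/2$, and this lies below $n(n+2)$ precisely when $n \le 5$. Observing that this threshold matches the hypothesis is a useful consistency check.

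Nevertheless, the proposal is not a complete proof, and it contains an error. The gap: you correctly flag the exotic strata (nets of singular symmetric forms with no common kernel of the expected size) as the obstacle, but you do not bound them yourself --- you defer to~\cite[\S5]{ev}, which is exactly the citation the paper relies on, so the hard part remains outsourced. The error: the claim that the exotic strata ``turn out to have strictly smaller dimension than the principal common-kernel strata'' is false. Already for $n = 3$, fix $v \in \PP(V^*)$ and take triples $(A_1, A_2, A_3)$ with each $A_i$ lying in the affine tangent space to the Veronese $\nu_2(\PP V^*) \subset \PP(S^2V^*)$ at~$[v^2]$. That tangent space $\{v\cdot w : w \in V^*\}$ is a $3$-dimensional linear subspace of $S^2V^*$ consisting entirely of rank-$\le 2$ forms, so all such triples lie in~$P$, and a generic one has no common kernel since the kernel of $v\cdot w$ moves with~$w$. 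The resulting locus has dimension $2 + 3\cdot 3 = 11$, which \emph{equals} the common-kernel bound $f(2)=11$ for that value of $r'$. The inequality $\dim P < n(n+2) = 15$ still holds, but only because both strata fall short of~$15$, not because the exotic stratum is strictly dominated. A correct argument must actually bound every stratum, including the ones without a common kernel; that case analysis is precisely what~\cite[\S5]{ev} supplies and what your proposal (like the paper) leans on.
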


\begin{lemma}\label{lem:n-equals-r}
Let $n = r \leq 5$. Then $\sigma_r(X)$ is defined scheme-theoretically by
$I_{\kappa_1 \leq 2r} = I_{\kappa \leq (r,2r,r)}$. Moreover, the ring
$S/I_{\kappa_1 \leq 2r}$ is Gorenstein, i.e.\ $\sigma_r(X)$ is
arithmetically Gorenstein.
\end{lemma}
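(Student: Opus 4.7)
Since $n=r$, the invariants $\kappa_{0}(x)$ and $\kappa_{2}(x)$ are automatically bounded by $r$, so $I_{\kappa_{0}\leq r}=I_{\kappa_{2}\leq r}=0$, which immediately yields $I_{\kappa_{1}\leq 2r}=I_{\kappa\leq(r,2r,r)}$.

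The overall strategy for the remaining assertions is to transport the reduced and Gorenstein structure of the variety $Z$ of commuting symmetric $r\times r$ matrices, established for $r\leq 5$ in~\cite[Thm.~3.1]{bpv}, from the affine slice $\mathcal W$ appearing in the proof of Lemma~\ref{lem:setP} out to all of $\mathbb A^{N+1}$. First I would verify the set-theoretic equality $\widehat{\sigma_{r}(X)}=\widehat{\Sigma}_{\kappa_{1}\leq 2r}$: Proposition~\ref{prop:vanish:semi} gives the containment, and~\cite[Cor.~1.4(ii)]{abo} (whose exceptional case does not occur when $n=r$) ensures that $\sigma_{r}(X)$ attains its expected projective dimension $r^{2}+2r-1$, so $\widehat{\sigma_{r}(X)}$ has codimension exactly $\binom{r}{2}$. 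Combined with the irreducibility and codimension bound of Lemma~\ref{lem:setP}, this forces equality on $\mathbb A^{N+1}-P$, and Lemma~\ref{lem:dim-P} rules out any additional component supported inside $P$.

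Next I would exploit the scheme-theoretic identification from the proof of Lemma~\ref{lem:setP}: on the slice $\mathcal W$, the Pfaffian scheme $\widehat{\Sigma}_{\kappa_{1}\leq 2r}\cap\mathcal W$ coincides with $Z$, which by~\cite[Thm.~3.1]{bpv} is reduced and Gorenstein of codimension $\binom{r}{2}$. Since $\mathbb A^{N+1}-P$ is the $\GL(U^{*})\times\GL(V^{*})$-orbit of $\mathcal W$, both reducedness and the Gorenstein property propagate to give that $\widehat{\Sigma}_{\kappa_{1}\leq 2r}-P$ is reduced and Gorenstein.

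The hard part is extending these properties across the locus $P$, and this is precisely where the codimension bound $\operatorname{codim} P>\binom{r}{2}$ from Lemma~\ref{lem:dim-P} is essential, since it guarantees that $P\cap\widehat{\Sigma}_{\kappa_{1}\leq 2r}$ has strictly positive codimension inside $\widehat{\Sigma}_{\kappa_{1}\leq 2r}$. Concretely, I would first deduce that $S/I_{\kappa_{1}\leq 2r}$ is Cohen--Macaulay (using the bound on $\dim P$ to lift Cohen--Macaulayness from $\widehat{\Sigma}-P$ across a subset of sufficiently large codimension); combined with the generic reducedness established above, unmixedness then forces the Pfaffian ideal to be radical and equal to the defining ideal of $\sigma_{r}(X)$. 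For the Gorenstein conclusion, I would invoke $\GL(U)\times\GL(V)$-equivariance: the canonical module of $S/I_{\kappa_{1}\leq 2r}$ is a reflexive rank-one equivariant sheaf which is trivial on the complement of $P$, and normality together with the codimension bound on $P$ then forces it to be globally trivial up to a character twist, yielding the Gorenstein property.
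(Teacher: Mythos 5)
Your first paragraph and the set-theoretic equality via~\cite[Cor.~1.4(ii)]{abo} are fine, and the identification of $\widehat{\Sigma}_{\kappa_1\leq 2r}\cap\mathcal W$ with the variety of commuting symmetric matrices, together with Lemma~\ref{lem:setP} and Lemma~\ref{lem:dim-P}, is exactly the right setup. But the step where you ``lift Cohen--Macaulayness from $\widehat{\Sigma}-P$ across a subset of sufficiently large codimension'' is not a valid inference. Cohen--Macaulayness is checked at each local ring: knowing that all the local rings away from $P$ are CM says nothing about the local rings at points of $P$, no matter how large $\operatorname{codim} P$ is. (Compare $\Spec k[x,y]/(x^2,xy)$, a line with an embedded point at the origin: the scheme is CM away from the origin, which is as small as you like, yet the scheme is not CM.) The same difficulty undermines the Gorenstein step: the canonical-module argument presupposes the ring is already $S_2$ (so that the canonical module is reflexive of rank one and determined by its restriction to a big open set), which is in essence the CM assertion you are trying to prove, so the argument is circular.

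The paper's route avoids this entirely by invoking the structure theory of Pfaffian ideals rather than trying to spread properties from the open orbit. By~\cite[Thm.~17]{kleppe-laksov-pfaffians}, the ideal of $(2r+2)\times(2r+2)$-Pfaffians of a \emph{generic} skew-symmetric matrix is a perfect Gorenstein ideal of codimension $\binom{r}{2}$. The ideal $I_{\kappa_1\leq 2r}$ is a linear specialization of that generic ideal, and Lemmas~\ref{lem:setP} and~\ref{lem:dim-P} show the specialization still has codimension exactly $\binom{r}{2}$: the component off $P$ has codimension $\geq\binom{r}{2}$, and no component can sit inside $P$ because $\operatorname{codim} P>\binom{r}{2}$ while every component of a Pfaffian scheme has codimension $\leq\binom{r}{2}$. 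By the general principle of generic perfection, a codimension-preserving linear specialization of a perfect ideal is again perfect with the same (self-dual) resolution, hence Gorenstein, and is in particular Cohen--Macaulay and unmixed at \emph{all} points, including those over $P$. Reducedness then does follow as you anticipate, from generic reducedness (the commuting-matrix slice) plus unmixedness. So your proposal has the right ingredients for the codimension count but is missing the key citation and the generic-perfection mechanism that turns that codimension count into a global CM/Gorenstein statement.
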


\begin{proof}
The ideal of the principal $(2r+2)\times (2r+2)$-Pfaffians of a generic
skew-symmetric matrix is a
Gorenstein ideal of codimension $\binom{r}{2}$~\cite[Thm.\ 17]{kleppe-laksov-pfaffians}.
Our ideal $I_{\kappa_1 \leq 2r}$ is a linear specialization of this ideal, and
by Lemmas~\ref{lem:setP} and~\ref{lem:dim-P}, it must be irreducible and have
the same codimension.  Therefore, the linear specialization is defined by a
regular sequence, so $\Sigma_{\kappa_1 \leq 2r}$ is also arithmetically Gorenstein and
irreducible.

Hence, $\widehat{\Sigma}_{\kappa_1 \leq 2r}$ is either reduced or
everywhere non-reduced.  As in the proof of Lemma~\ref{lem:setP}, let $\mathcal W
\subseteq \mathbb A^{N+1}$ be the linear space defined by $A_1=\Id$, and consider the
scheme-theoretic
intersection $\widehat{\Sigma}_{\kappa_1 \leq 2r}\cap \mathcal W$.  Again, the codimension of
$\widehat{\Sigma}_{(r,2r,r)} \cap \Lambda$ in $\mathcal W$ is $\binom{r}{2}$, so 
the generators of the ideal of $\Lambda$ form a regular sequence on the local
ring of any point of $\widehat{\Sigma}_{(r,2r,r)}$ contained in $\mathcal W$.  The intersection is
isomorphic to the variety of commuting symmetric matrices from the proof of
Lemma~\ref{lem:setP}, which is reduced.  This implies that $\widehat{\Sigma}_{\kappa\leq
(r,2r,r)}$ is reduced as well, and thus that $\Sigma_{\kappa\leq (r,2r,r)}$ is reduced.
\end{proof}

When $r>5$ we have a partial result.  Let $J_P\subseteq S^\bullet(U\otimes S^2(V))$ be the ideal defining~$P$.
\begin{cor}\label{cor:saturated}
For any $r$, the variety $\sigma_r(X)$ is defined by the prime ideal $(I_{\kappa\leq(r,2r,r)}:J_P^\infty)$.
\end{cor}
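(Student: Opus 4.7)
The plan is to combine Lemma~\ref{lem:setP} (which describes $\widehat{\Sigma}_{\kappa\leq(r,2r,r)}$ on the complement of $P$ in the case $n=r$) with standard saturation formalism to show that $(I_{\kappa\leq(r,2r,r)} : J_P^\infty)$ equals the prime ideal $I(\sigma_r(X))$. Throughout, I work in the setup $n=r$ inherited from Lemma~\ref{lem:setP}. A first observation is that $\sigma_r(X) \not\subseteq P$: the rank-$n$ tensor $\sum_{i=1}^n e_1 \otimes v_i v_i^T$, where $v_1, \ldots, v_n$ is a basis of $V^*$, has $A_1 = \Id$, so $\det(t_1 A_1 + t_2 A_2 + t_3 A_3) \not\equiv 0$ as a polynomial in $t$.

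The containment $(I_{\kappa\leq(r,2r,r)} : J_P^\infty) \subseteq I(\sigma_r(X))$ is formal. By Proposition~\ref{prop:vanish:semi}, $I_{\kappa\leq(r,2r,r)} \subseteq I(\sigma_r(X))$. Since $I(\sigma_r(X))$ is prime and $J_P \not\subseteq I(\sigma_r(X))$, I would pick $h \in J_P \setminus I(\sigma_r(X))$: for any $f$ in the saturation, $fh^k \in I_{\kappa\leq(r,2r,r)} \subseteq I(\sigma_r(X))$ for some $k$, and primality forces $f \in I(\sigma_r(X))$.

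For the reverse containment, I would show both that $\sigma_r(X)$ is the unique irreducible component of $\Sigma_{\kappa\leq(r,2r,r)}$ not contained in $P$, and that the primary component of $I_{\kappa\leq(r,2r,r)}$ there is reduced. By Lemma~\ref{lem:setP}, $\widehat{\Sigma}_{\kappa\leq(r,2r,r)} - P$ is irreducible of codimension at least $\binom{r}{2}$; since $\widehat{\sigma_r}(X) - P$ is nonempty and has codimension exactly $\binom{r}{2}$ (by non-defectivity, \cite[Cor.~1.4]{abo}), the two sets coincide outside $P$. For scheme-theoretic reducedness on $\mathbb A^{N+1} - P$, I would repeat the specialization argument from the proof of Lemma~\ref{lem:setP}: the scheme-theoretic intersection $\widehat{\Sigma}_{\kappa\leq(r,2r,r)} \cap \mathcal W$ (where $\mathcal W = \{A_1 = \Id\}$) is the variety of commuting symmetric matrices, which is integral by \cite[Thm.~3.1]{bpv}. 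Because the $\GL(U^*) \times \GL(V^*)$-orbit of $\mathcal W - P$ covers $\mathbb A^{N+1} - P$ by Remark~\ref{rmk:GLU}, reducedness propagates to all of $\widehat{\Sigma}_{\kappa\leq(r,2r,r)} - P$. Therefore the primary component of $I_{\kappa\leq(r,2r,r)}$ at the prime $I(\sigma_r(X))$ is $I(\sigma_r(X))$ itself; saturating by $J_P^\infty$ then removes every other primary component, whose prime must contain $J_P$, and yields $(I_{\kappa\leq(r,2r,r)} : J_P^\infty) = I(\sigma_r(X))$.

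The main obstacle I anticipate is the scheme-theoretic reducedness step for $r > 5$, where Lemma~\ref{lem:dim-P} is unavailable to force global reducedness of $\widehat{\Sigma}_{\kappa\leq(r,2r,r)}$. The orbit-cover argument above rescues the conclusion by requiring reducedness only on the open set $\mathbb A^{N+1} - P$, where the structure descends to the (reduced and irreducible) commuting variety of symmetric matrices.
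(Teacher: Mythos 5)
Your overall strategy — reduce to the slice $\mathcal W = \{A_1 = \Id\}$, identify the scheme-theoretic intersection with the (reduced and irreducible) variety of commuting symmetric matrices, and use $\GL(U^*)\times\GL(V^*)$-equivariance to cover $\mathbb A^{N+1}-P$ — is the right one, and since the paper states the corollary without an explicit proof, it is natural to reconstruct the argument from the ingredients of Lemmas~\ref{lem:setP} and~\ref{lem:n-equals-r}. Your restriction to $n = r$ is also correct and essentially forced: when $n > r$, every $x\in\Sub_{3,r}$ has all three $A_i$ sharing an $r$-dimensional row space, so $\Sub_{3,r} \subseteq P$; since $\widehat{\Sigma}_{\kappa\leq(r,2r,r)} \subseteq \Sub_{3,r}$, the saturation $(I_{\kappa\leq(r,2r,r)} : J_P^\infty)$ would be the unit ideal, so the statement only makes sense with $n=r$.

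There is, however, a genuine gap in the reducedness step, and you cite the wrong lemma for it. Knowing that the scheme-theoretic intersection $\widehat{\Sigma}_{\kappa_1\leq 2r}\cap\mathcal W$ is reduced does not by itself imply that $\widehat{\Sigma}_{\kappa_1\leq 2r}$ is reduced along $\mathcal W$; a thickened or embedded structure in the ambient scheme can vanish upon taking a transverse slice. What makes the implication work in the proof of Lemma~\ref{lem:n-equals-r} (not Lemma~\ref{lem:setP}, whose proof contains only the matrix calculation) is the Cohen--Macaulay/regular-sequence mechanism: $I_{\kappa_1\leq 2r}$ is a linear specialization of the generic Pfaffian ideal, and wherever it attains the generic codimension $\binom{r}{2}$ the quotient is Cohen--Macaulay, the ideal of $\mathcal W$ is locally a regular sequence, and a local ring whose quotient by a regular sequence is reduced is itself reduced. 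That is precisely what you need here: the codimension count you already carried out via Lemma~\ref{lem:setP} and non-defectivity shows the codimension equals $\binom{r}{2}$ on $\mathbb A^{N+1}-P$, so Cohen--Macaulayness holds there even though Lemma~\ref{lem:dim-P} is unavailable for $r>5$, and the regular-sequence step then lifts reducedness from the slice to the ambient local rings at every point of $\mathcal W$. You should make this step explicit; the phrase ``reducedness propagates'' is not self-justifying without it.
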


Note that computing the saturations as in Corollary~\ref{cor:saturated} can
be non-trivial.

\begin{proof}[Proof of Theorem \ref{thm:main}]
Lemma~\ref{lem:n-equals-r} proves the theorem in the case when $n = r$
and so we just need to extend this result to the
cases when $n \neq r$. We let $N' = 3 \binom{r}{2} - 1$, so that $\PP^{N'}$
is the projective space of partially symmetric $3 \times r \times r$ tensors. We write
$X' \subset \PP^{N'}$ for the image of $\PP^2 \times \PP^{r-1}$ embedded by
$\Osh(1, 2)$.

First, suppose that $n < r$. We pick an inclusion of $V^*$ into $\CC^r$, and
also a projection from $\CC^r$ back to $V^*$. These define an inclusion $\PP^N
\rightarrow \PP^{N'}$ and a rational map
$\pi\colon \PP^{N'} \rightarrow \PP^N$ respectively. Because the projection is linear, it
commutes with taking secant varieties, so $\sigma_r(X) = \pi(\sigma_r(X'))$.
Applying Lemma~\ref{lem:n-equals-r}, we get the first equality of
\begin{equation*}
\pi(\sigma_r(X')) = \pi(\Sigma_{\kappa_1 \leq 2r})
\supset \pi(\Sigma_{\kappa_1 \leq 2r} \cap \PP^N)
= \Sigma_{\kappa_1 \leq 2r} \cap \PP^N
\supset \sigma_r(X) = \pi(\sigma_r(X')).
\end{equation*}
Note that the middle equality follows from the fact that $\pi$ is the identity
on $\PP^N$.  We conclude that $\sigma_r(X)$ is defined by $I_{\kappa_1 \leq
2r}$, which is the statement of the theorem, since the conditions on $\kappa_0$
and $\kappa_2$ are trivial when $n < r$.

Second, we want to prove the theorem when $n > r$, for which we use
Lemma~\ref{lem:subspace-projection}. We consider the subspace variety $\Sub_{3,
r} \subset \mathbb A^{N+1}$ and its desingularization~$\pi\colon E \to \Sub_{3, r}$.
By Proposition~\ref{prop:subspace}, $\Sub_{3,r}$ is the affine cone over $\Sigma_{\kappa_0 \leq r}$,
which contains $\widehat{\Sigma}_{\kappa \leq (r, 2r, r)}$. We set $Z:=\pi^{-1}(\widehat{\Sigma}_{\kappa \leq (r, 2r, r)})$.
Note that, along any fiber
$U^* \otimes S^2 \widetilde{V}^*$ of $q\colon E\to \Gr(r,V^*)$,
we have that $Z\cap (U^* \otimes S^2\widetilde{V}^*)$ is defined by
the $\kappa_1 \leq 2r$ equations
applied to $U^* \otimes S^2 \widetilde V^*$.  It follows that $Z\subseteq E$
is defined by the pullback of $I_{\kappa\leq (r,2r,r)}$.
Since $\widetilde V^*$ is $r$-dimensional,
Lemma~\ref{lem:n-equals-r} implies that $Z\cap (U \otimes S^2 \widetilde{V}^*)$
 is the cone over the $r$th secant variety of $\PP(U^*) \times \PP(\widetilde V^*)$ 
 in $U^* \otimes S^2 \widetilde V^*$.  In particular, $Z$ is reduced.
We thus have
the inclusions
\begin{equation*}
\pi(Z) \subset \widehat{\sigma_r}(X) \subset \widehat{\Sigma}_{\kappa \leq (r,2r, r)}
= \pi(Z).
\end{equation*}
The first inclusion is clear, the second is by
Proposition~\ref{prop:vanish:semi}, and the equality follows from
Lemma~\ref{lem:subspace-projection}. Therefore, these schemes must be equal,
which is the desired statement.
\end{proof}

We conclude by observing that Theorem~\ref{thm:main} is false for $r=7$. (We do
not know whether or not it holds for $r=6$.)

\begin{example}\label{ex:notequal}
Set $n = \dim V^*=6$, in which case $\Sigma_{\kappa\leq
(7,14,7)}=\Sigma_{\kappa_1\leq 14}$.  Let $X$ be the Segre-Veronese variety of
$\PP^2 \times \PP^5$ embedded by $\mathcal O(1,2)$ in $\PP^{62}$. We use a simple dimension count to show that the secant $\sigma_7(X)$ is properly contained
in $\Sigma_{\kappa_1\leq 14}$.

The secant variety $\sigma_7(X)$ is not defective~\cite[Corollary~1.4(ii)]{abo}, so it has the expected dimension, namely $\dim \sigma_7(X)= 7\cdot \dim
X+6=55$.  On the other hand, since $I_{\kappa_1\leq 14}$ is a Pfaffian ideal,
its codimension is at most $\binom{4}{2}$.  We thus have
\[
\dim \Sigma_{\kappa_1\leq 14}\geq \dim \mathbb P^{62}-\binom{4}{2}=62-6=56.
\]
Since $56>55$, it follows that $\sigma_7(X)\subsetneq \Sigma_{\kappa_1\leq 14}$.

Note that $\dim V^* = 6$ is the smallest dimension such that the $7$th secant
variety is properly contained within~$\PP^N$.

\end{example}

\bibliography{secant}
\bibliographystyle{plain}
\end{document}